\title{Skew-morphisms of cyclic $p$-groups}
\author{Istv\'an~Kov\'acs}
\address{I.~Kov\'acs, 
IAM and FAMNIT, University of Primorska, Glagolja\v ska 8, 
6000 Koper, Slovenia}
\email{istvan.kovacs@upr.si}
\author{Roman Nedela}
\address{R.~Nedela,
Institute of Mathematics, Slovak Academy of Sciences, Severn\'a 5, 
975 49 Bansk\'a Bystrica, Slovakia}  
\address{
 NTIS -- New Technologies for the Information Society, Faculty of Applied Sciences, University of West Bohemia,
Technick\'a 8, 306 14 Plze\v{n},
Czech Republic}
\email{nedela@savbb.sk}
\thanks{{\it 2010 Mathematics Subject Classification.} 20D15, 05C10, 20F05. \\ 
\indent {\it Key words and phrases.} Group automorphism, skew-morphism, metacyclic $p$-group.}
\newtheorem{thm}{Theorem}
\newtheorem{lem}{Lemma}
\newtheorem{prop}{Proposition}
\theoremstyle{definition}
\theoremstyle{remark}
\theoremstyle{defintion}
\newtheorem*{prob}{Problem}
\def\Z{\mathbb{Z}}
\def\cN{\mathcal{N}}
\DeclareMathOperator{\Aut}{Aut}
\DeclareMathOperator{\Sym}{Sym}
\DeclareMathOperator{\Skew}{Skew}
\begin{document}

\maketitle

\begin{abstract}
Let $G$ be a finite group having a factorisation $G=AB$ into  
subgroups $A$ and $B$ with $B$ cyclic and $A\cap B=1,$ 
and let $b$ be a generator of $B$.  The associated skew-morphism is the bijective mapping $f:A \to A$ well defined by the equality $baB=f(a)B$ where $a\in A$.   In this paper, we shall classify all skew-morphisms of cyclic $p$-groups where $p$ is an odd prime.
\end{abstract}

\section{Introduction}
Following Jajcay and \v{S}ir\'{a}\v{n} \cite{JS}, 
a \emph{skew-morphism} of a finite group $A$ is a bijective mapping 
$f:A\to A$ fixing the identity element of $A$ and having the property that $f(xy)=f(x)f^{\pi(x)}(y)$ for all 
$x,y\in A,$ where the integer $\pi(x)$ depends only on $x$. 
We also refer to the mapping $\pi : A \to \Z$ as a \emph{power function} corresponding to $f$. 
Although the concept of a skew-morphism 
was introduced and investigated in the context of regular Cayley maps and hypermaps, see \cite{CJT07,CT,JN15,JS}, 
Conder et al.\ \cite{CJT} pointed out that it appeared   
already in the context of factorisation of groups. Namely, let $G$ be a finite group having a factorisation $G=AB$ into subgroups $A$ and $B$ with $B$ cyclic and $A\cap B=1,$ and let $b$ be a generator of $B$. Then there exists a bijective mapping 
$f:A \to A$ well defined by the equality $baB=f(a)B$ where $a\in A$.  
It is not hard to show that $f$ is a skew-morphism of $A$. Moreover, 
all skew-morphisms of $A$ arise in this way. 
If, in addition, $A$ is normal in $G,$ then $bab^{-1}=\alpha(a)$ for some automorphism $\alpha$ of $A,$ and therefore, the 
skew-morphism $f=\alpha$. Every automorphism of $A$ is a 
skew-morphism; the converse, however, does not hold in general.
Several examples of skew-morphisms which are not 
group automorphisms were given in \cite{BJ,CJT,KN} for abelian 
and in \cite{CJT,ZD} for dihedral groups.  A problem of determining
all skew-morphisms for a given group arises. Since every automorphism 
is a skew-morphism, this problem is at least as hard as the problem of
determining $\Aut(G)$ for a given group $G$, which is not an easy task in general. One of the problems one needs to overcome consists in the fact that the composition of two skew-morphisms of $G$ may not be a skew-morphism. Even to determine the skew-morphisms of cyclic groups seems to be a difficult problem. 

\begin{prob}
Determine the skew-morphisms of cyclic groups.
\end{prob}

In this paper, we make a step towards its solution
by determining all skew-morphisms for cyclic $p$-groups, where $p>2$ is an odd prime. The next natural step to accomplish is a determination of skew-morphisms of cyclic $2$-groups. Partial solution of the above problem appears in \cite{CT}, where skew-morphisms containing a generating orbit closed under taking inverses are classified.

In \cite{KN}, we investigated skew-morphisms of cyclic groups in connection with Schur rings of cyclic groups. Among others we proved the following decomposition theorem.
In what follows we denote by $\Skew(\Z_n)$ the set of 
skew-morphisms of the cyclic additive group $\Z_n,$ and by  
$\phi$  Euler's totient function. 

\begin{thm}{\rm \cite[Theorem 1.1]{KN}} Let $n=n_1n_2$, where $\gcd(n_1,n_2)=1$, and $\gcd(n_1,\phi(n_2))=\gcd(\phi(n_1),n_2)=1$. Then $\sigma\in \Skew(\Z_n)$ if and only if $\sigma=\sigma_1\times \sigma_2$, where $\sigma_1\in \Skew(\Z_{n_1})$ and $\sigma_2\in \Skew(\Z_{n_2})$.
\end{thm}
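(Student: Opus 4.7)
For the backward direction I would identify $\Z_n\cong\Z_{n_1}\oplus\Z_{n_2}$ via the Chinese Remainder Theorem. Given $\sigma_i\in\Skew(\Z_{n_i})$ with power function $\pi_i$, I set $\sigma:=\sigma_1\times\sigma_2$ and define $\pi(x_1,x_2)$ to be any integer satisfying $\pi(x_1,x_2)\equiv\pi_i(x_i)\pmod{|\sigma_i|}$ for $i=1,2$; such a $\pi$ exists by CRT provided $\gcd(|\sigma_1|,|\sigma_2|)=1$. This coprimality follows from a standard divisibility bound on the order of a skew-morphism of $\Z_{n_i}$ (namely, $|\sigma_i|$ divides $n_i\phi(n_i)$, derived from the associated factorisation $G_i=A_iB_i$) together with the three hypotheses $\gcd(n_1,n_2)=\gcd(n_1,\phi(n_2))=\gcd(\phi(n_1),n_2)=1$, which jointly yield $\gcd(n_1\phi(n_1),n_2\phi(n_2))=1$. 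A coordinate-wise verification of the skew-morphism identity then completes this direction.

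For the forward direction, let $\sigma\in\Skew(\Z_n)$ with power function $\pi$, and write $\Z_n=A_1\oplus A_2$ with $|A_i|=n_i$. My plan has three steps. Step~1 is to show $\sigma(A_i)=A_i$: this reduces to the standard fact that skew-morphisms preserve orders of elements, since $A_i$ is precisely the set of elements of $\Z_n$ of order dividing $n_i$. Step~2 is to set $\sigma_i:=\sigma|_{A_i}$; since $A_i$ is $\sigma$-invariant, restricting the identity $\sigma(x+y)=\sigma(x)+\sigma^{\pi(x)}(y)$ to $A_i$ shows immediately that $\sigma_i\in\Skew(A_i)$ with power function $\pi_i(x):=\pi(x)\bmod|\sigma_i|$.

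Step~3 is the main obstacle: to prove $\sigma=\sigma_1\times\sigma_2$, equivalently $\sigma(x_1+x_2)=\sigma(x_1)+\sigma(x_2)$ whenever $x_i\in A_i$. By the skew-morphism identity this reduces to $\sigma^{\pi(x_1)}(x_2)=\sigma(x_2)$ on $A_2$, that is, $|\sigma_2|\mid\pi(x_1)-1$ for every $x_1\in A_1$ (and symmetrically for $A_2$). My approach would be to study the reduction $x_1\mapsto\pi(x_1)\bmod|\sigma_2|$: the skew-morphism axiom together with Step~1 constrains this assignment so that its image corresponds to an orbit structure of length dividing $|A_1|=n_1$, while also embedding into a cyclic quotient of the multiplicative structure of $\langle\sigma_2\rangle$ of order dividing $\phi(|\sigma_2|)$, hence $\phi(n_2\phi(n_2))$. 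The hypotheses $\gcd(n_1,n_2)=\gcd(n_1,\phi(n_2))=1$ then force this image to be trivial, yielding $\pi(x_1)\equiv 1\pmod{|\sigma_2|}$; the symmetric argument uses $\gcd(n_2,\phi(n_1))=1$. The hard part is precisely this: extracting from the skew-morphism axiom the exact way in which $\pi$ couples the two direct factors, so that the coprimality hypotheses can be brought to bear to force decoupling.
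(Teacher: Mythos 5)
This theorem is imported from \cite[Theorem 1.1]{KN}; the present paper gives no proof of it, so your proposal can only be judged on its own terms, and both directions contain genuine gaps.

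In the backward direction, the pivotal claim that the three hypotheses yield $\gcd(n_1\phi(n_1),n_2\phi(n_2))=1$ is false: for $n_1,n_2\ge 3$ both $\phi(n_1)$ and $\phi(n_2)$ are even, so this gcd is even in essentially every nontrivial instance (e.g.\ $n_1=3$, $n_2=5$ satisfies all three hypotheses, yet $\gcd(6,20)=2$). Hence $\gcd(|\sigma_1|,|\sigma_2|)=1$ does not follow, and the CRT definition of $\pi$ does not get off the ground. What is actually needed is the compatibility $\pi_1(x_1)\equiv\pi_2(x_2)\equiv 1\pmod{\gcd(|\sigma_1|,|\sigma_2|)}$ (setting $x_2=0$ shows this is also necessary for $\sigma_1\times\sigma_2$ to be a skew-morphism), and establishing it requires a structural fact about power functions of skew-morphisms of $\Z_{n_i}$ --- roughly, that $\pi_i\equiv 1$ modulo the part of $|\sigma_i|$ coprime to $n_i$ --- which you do not supply and which is comparable in depth to the forward direction.

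In the forward direction, Step 3 --- which you rightly identify as the crux --- rests on a mechanism that does not close. You propose to bound the image of $x_1\mapsto\pi(x_1)\bmod|\sigma_2|$ by a divisor of $n_1$ on one side and a divisor of $\phi(|\sigma_2|)$, hence of $\phi(n_2\phi(n_2))$, on the other, and then invoke coprimality. But $\gcd(n_1,\phi(n_2\phi(n_2)))=1$ is not among the hypotheses and does not follow from them: take $n_1=11$, $n_2=47$; then $\gcd(11,47)=\gcd(11,46)=\gcd(10,47)=1$, yet $\phi(47\cdot 46)=1012=2^2\cdot 11\cdot 23$ is divisible by $11$. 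So even granting your two divisibility claims (neither of which is actually derived --- in particular it is not shown that $x_1\mapsto\pi(x_1)$ behaves homomorphically into any unit group), the image need not be trivial. The quantity the hypotheses do control is $\gcd(n_1,|\sigma_2|)$ itself, since $|\sigma_2|\mid n_2\phi(n_2)$; a workable argument must therefore produce a group of order dividing both $n_1$ and $|\sigma_2|$, not $\phi(|\sigma_2|)$, and the natural vehicle for this is the skew product group $\langle t,\sigma\rangle$ and a Hall-type factorisation of it rather than a direct analysis of $\pi$. Finally, Step 1 silently uses that skew-morphisms of $\Z_n$ preserve element orders; this is true for cyclic groups but is itself a statement requiring proof, not an axiom.
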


It was shown in \cite[Corollary~4.10]{KN} (and reproved in \cite[Theorem~6.4]{CJT}) that the cyclic group $\Z_{p^2}$ has exactly $(p-1)(p^2-2p+2)$ skew-morphisms where $p$ is an odd prime. 
In this paper,  we generalise this result to the cyclic group $\Z_{p^e}$ of order $p^e$ with arbitrary $e\ge 2$ by proving

\begin{thm}\label{T-main}
If $e\ge  2$ and $p$ is an odd prime, then $\Z_{p^e}$ has 
exactly $(p-1)(p^{2e-1}-p^{2e-2}+2)/(p+1)$ skew-morphisms.
\end{thm}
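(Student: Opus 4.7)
My plan is to associate to each skew-morphism $f$ of $A := \Z_{p^e}$ its \emph{skew-product group}
\[
G(f) \,:=\, \langle \widehat{A},\, f \rangle \,\le\, \Sym(A),
\]
where $\widehat{A}$ denotes the image of $A$ under the left regular representation. One has $G(f) = \widehat{A}\langle f \rangle$ and $\widehat{A} \cap \langle f \rangle = 1$, so $(G(f), \widehat{A}, f)$ is precisely the factorisation triple of the introduction, and the skew-morphism is recovered through $f \cdot \widehat{a} \, \langle f \rangle = \widehat{f(a)} \, \langle f \rangle$. Counting $\Skew(\Z_{p^e})$ thus reduces to enumerating the triples $(G, A, b)$ with $G = A\langle b \rangle$, $A \cong \Z_{p^e}$, and $A \cap \langle b \rangle = 1$, counted up to the equivalence of group isomorphisms fixing $A$ pointwise.

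The key structural step is to prove a dichotomy on $G(f)$: either $f$ is a group automorphism of $A$ (contributing exactly $\phi(p^e) = p^{e-1}(p-1)$ skew-morphisms, one for each element of $\Aut(\Z_{p^e})$), or $G(f)$ is a metacyclic $p$-group. I would argue by induction on $e$, using the standard fact that the kernel $\Ker(\pi)$ of the power function of $f$ is a subgroup of $A$. Combined with the cyclicity of $\Aut(\Z_{p^i})$ and a careful analysis of how $f$ acts on the characteristic chain $0 < A_1 < \cdots < A_e$, this should force the non-automorphism case into the $p$-group regime. The odd-prime hypothesis enters here essentially, ruling out involutive phenomena arising from the $\pm 1$ automorphism on a quotient.

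For the $p$-group case, I would invoke the classification of metacyclic $p$-groups (clean for odd $p$) to parametrise the admissible pairs $(G, A)$ by presentations of the form
\[
G \,=\, \langle\, a,\, b \,\mid\, a^{p^e} = 1,\; b^{p^n} = a^{p^s},\; b a b^{-1} = a^{r} \,\rangle,
\]
with $A = \langle a \rangle$, $B = \langle b \rangle$, and $(n, s, r)$ subject to standard compatibility conditions. For each such pair, distinct generators of $B$ yield distinct skew-morphisms unless related by an automorphism of $G$ fixing $A$ pointwise; summing $\phi(p^n)/|\Aut_A(G)|$ over the parameter space and adding the $p^{e-1}(p-1)$ group-automorphism contributions, the total should collapse to the claimed count after elementary manipulation exploiting the divisibility $(p+1) \mid p^{2e-2}(p-1) + 2$. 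The case $e = 2$ reproducing $(p-1)(p^2 - 2p + 2)$ from \cite{KN} serves as a consistency check.

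The principal obstacle I foresee is the structural step --- forcing the non-automorphism skew-product group to be a $p$-group --- where the odd-prime hypothesis is indispensable. The $p = 2$ case genuinely admits further metacyclic types (dihedral, generalised quaternion, semidihedral, modular maximal-cyclic) that would have to be handled separately. Once the structural reduction is established, the remaining steps amount to a careful but essentially elementary enumeration within the well-understood family of odd metacyclic $p$-groups; the main care is in correctly identifying equivalent triples so as to avoid over- or under-counting.
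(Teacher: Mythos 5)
Your overall framework (passing to the skew-product group and enumerating factorisation triples) is reasonable, but the central structural claim on which your entire count rests is false. You assert a dichotomy: either $f$ is an automorphism of $\Z_{p^e}$, or $G(f)$ is a metacyclic $p$-group. This omits an entire class of skew-morphisms, namely those which are \emph{not} automorphisms but whose order is divisible by a prime dividing $p-1$; for these, $|G(f)|=p^e\cdot|f|$ is not a power of $p$. Such skew-morphisms exist in abundance. Already for $e=2$: by Lemma~\ref{KN}, any skew-morphism of $\Z_{p^2}$ of $p$-power order has order dividing $p$ and is therefore an automorphism, so under your dichotomy the total count would be $|\Aut(\Z_{p^2})|=p(p-1)$, whereas the true count $(p-1)(p^2-2p+2)$ exceeds this by $(p-1)^2(p-2)>0$. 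The objects your dichotomy misses are precisely the skew-morphisms $s_{i,j,k,l}$ with $i\ne 0$ and $k\ne 0$, whose orders are of the form $p^{e-1-c}d$ with $d>1$ and $d\mid(p-1)$; they account for most of $\Skew(\Z_{p^e})$.

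The correct split is not ``automorphism versus $p$-group'' but ``$|s|$ a $p$-power versus not.'' In the first case $G(f)$ is indeed a metacyclic $p$-group (Huppert), and your plan of running through presentations of metacyclic $p$-groups is essentially what the paper does in Theorem~\ref{T3} via Lemmas~\ref{L1}--\ref{L3}. In the second case one must factor $s=s_1s_2$ with $|s_1|$ a $p$-power and $|s_2|=d$ coprime to $p$, show the Sylow $p$-subgroup $P=\langle t\rangle\langle s_1\rangle$ is normal with $s_2$ acting on it as an automorphism of order $d$, deduce that $P$ must be \emph{split} metacyclic, and then invoke the Bidwell--Curran description of $\Aut(P)$; this is Theorem~\ref{T4} and is where most of the work lies. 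Your remaining step --- deciding when two parameter tuples give the same skew-morphism --- is also far from ``elementary manipulation'' (it occupies Theorem~\ref{T1} and Proposition~\ref{unique}), but that issue is moot until the dichotomy is repaired.
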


Detailed description of the set $\Skew(\Z_{p^e})$ of skew-morphisms can be found in Theorems~\ref{T3} and \ref{T4},
which are the main results of the paper. 

The paper is organised as follows. In the next section we recall some known facts for further use.
Among others we state in Lemma~\ref{CJT} that the order of a 
skew-morphism of a cyclic group $\Z_n$ divides $n\phi(n)$. In particular, if $n=p^e$ for a prime $p$, the skew-morphisms split into two classes: the first class contains those skew-morphisms whose order is a power of $p$, the others form the complement of the first class. In Section~3 we define a two-parametrised family of skew-morphisms $s_{i,j}$ 
and investigate its properties. In Section~4 the above family is generalised to a family of skew-morphisms $s_{i,j,k,l}$ given by four integer parameters $i,j,k,l$. Finally, in the last section we prove
 that every skew-morphism in $\Skew(\Z_{p^e})$  is one of 
 $s_{i,j,k,l}$,
and that the skew-morphisms of the first class are exactly the skew-morphisms $s_{i,j}=s_{i,j,0,0}$, see Theorems~\ref{T3} and \ref{T4}. For the purpose of enumeration we clarify in Proposition~\ref{unique} which $4$-tuples of parameters determine the same skew-morphism.

\section{Preliminary results} 
Throughout the paper $\Z_{p^e}$ represents the cyclic group of order $p^e$ where $p$ is an odd prime. 
Let $\Sym(\Z_{p^e}),\, \Aut(\Z_{p^e})$ and $\Skew(\Z_{p^e})$ denote the set (group) of all permutations, automorphisms, and skew-morphisms of $\Z_{p^e},$ respectively. In this paper we multiply 
permutations from the right to the left, that  is, if $f$ and $g$ are 
in $\Sym(\Z_{p^e})$ and $x\in\Z_{p^e},$ then $(fg)(x)=f(g(x))$.
We set $t$ to denote the translation of $\Z_{p^e}$ defined by
$t(x)=x + 1$.  
\medskip

Let $s\in\Skew(\Z_{p^n})$ be a skew-morphism with power function $\pi$. The \emph{skew product group} of $\Z_{p^e}$ induced by $s$ is the group $\langle t,s\rangle$.  Notice that, the skew-product group 
factorises as $\langle t,s\rangle=\langle t\rangle \langle s\rangle,$ 
and for every $i\in\Z_{p^e},$
\begin{equation}\label{cr}
s t^{i}=t^{s(i)} s^{\pi(i)}.
\end{equation}
Also, $|\langle t,s\rangle|=p^e\cdot |s|,$ where $|s|$ is the order of the permutation $s$. The following 
converse also holds (see \cite[Proposition~2.2]{KN}): if  
$f\in\Sym(\Z_{p^e}),$ then 
\begin{equation}\label{criterion}
f(0)=0\;\text{and}\; |\langle t,f\rangle|=p^e\cdot |f| 
\implies  f\in\Skew(\Z_{p^e}).
\end{equation}

The next result is in \cite[Corollary 3.4]{KN}, see \cite[Theorem~6.1]{CJT}, as well.  

\begin{lem}\label{CJT}
Let $s\in\Skew(\Z_n)$. Then the order $|s|$ of $s$ divides 
$n\phi(n)$. Moreover, if $\gcd(|s|,n)=1$ or $\gcd(\phi(n),n)=1,$ then 
$s$ is an automorphism of $\Z_n$. 
\end{lem}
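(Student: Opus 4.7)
The natural arena for the proof is the skew product group $G = \langle t, s\rangle$ introduced in \eqref{cr}: by construction $G = \langle t\rangle\langle s\rangle$ with $\langle t\rangle\cap\langle s\rangle = 1$ (since $s$ fixes $0$), so $|G| = n\cdot|s|$; to bound $|s|$ by $n\phi(n)$ is thus to bound $|G|$ by $n^2\phi(n)$.

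The plan is to extract two complementary pieces of information from the power function $\pi$. First, I would verify two standard structural facts: (i) each value $\pi(x)$ is a unit modulo $|s|$, so $\pi$ lands in $\Z_{|s|}^*$; (ii) the preimage $K := \Ker\pi = \pi^{-1}(1)$ is an $s$-invariant subgroup of $\Z_n$, and on $K$ the skew-morphism identity collapses to additivity, so $s|_K$ is a \emph{bona fide} automorphism of the cyclic group $K$. Both follow from careful manipulation of the skew-morphism identity together with the associativity-derived rule $\pi(x+y) \equiv \pi(x)\,\pi_{\pi(x)}(y)\pmod{|s|}$, where $\pi_k$ denotes the power function of the iterate $s^k$.

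Setting $d = [\Z_n:K]$, step (ii) yields that the order of $s|_K$ divides $\phi(n/d)$. Next I would argue that the induced map on cosets -- formalised by viewing $\pi$ as a ``twisted homomorphism'' $\Z_n/K \to \Z_{|s|}^*$ -- is injective, forcing $d \mid \phi(|s|)$. Combining these through a bookkeeping argument that tracks how the orbits of $s$ split between $K$ and its $d-1$ non-identity cosets should give $|s| \mid d\cdot\phi(n/d)$, which in turn divides $n\phi(n)$.

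For the automorphism criterion, the first claim already forces $|s| \mid \phi(n)$ when $\gcd(|s|,n)=1$. In this coprime setting, It\^o's theorem applied to the bicyclic factorisation $G = AB$ makes $G$ metabelian, and Schur--Zassenhaus together with the coprimality of $|A|$ and $|B|$ is designed to force $\langle t\rangle \triangleleft G$; once this holds, conjugation by $s$ induces an automorphism of $\Z_n$ which, via \eqref{cr}, coincides with the permutation $s$, giving $s \in \Aut(\Z_n)$. The case $\gcd(\phi(n),n)=1$ is analogous (the $\phi(n)$-part of $|s|$ and the $n$-part must both behave automorphically). The step I expect to be most delicate is establishing $d \mid \phi(|s|)$: the power function is not an honest homomorphism, so one must exploit the associativity identity iteratively to see that $\pi$ descends to a well-defined injection out of $\Z_n/K$.
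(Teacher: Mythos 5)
This lemma is not proved in the paper at all: it is quoted from \cite[Corollary~3.4]{KN} (see also \cite[Theorem~6.1]{CJT}), so there is no internal proof to measure your attempt against. Judged on its own, your proposal is a strategy outline whose two load-bearing steps are exactly the ones left unargued. For the divisibility $|s|\mid d\cdot\phi(n/d)$: you correctly note that $K=\Ker\pi$ is an $s$-invariant subgroup on which $s$ restricts to an automorphism, so the order of $s|_K$ divides $\phi(n/d)$. But the order of $s$ is the least common multiple of \emph{all} its orbit lengths, and the behaviour of $s|_K$ says nothing direct about the orbits on $\Z_n\setminus K$, which is where the order is typically attained. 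The ``bookkeeping argument'' asserting that these orbit lengths divide $d\cdot|s|_K|$ is precisely the content of the theorem and is nowhere supplied. Relatedly, injectivity of the induced map $\Z_n/K\to\Z_{|s|}^*$ yields only the inequality $d\le\phi(|s|)$, not $d\mid\phi(|s|)$: since $\pi$ is not a homomorphism, its image need not be a subgroup of $\Z_{|s|}^*$.

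The automorphism criterion has a similar gap. Schur--Zassenhaus produces complements to a \emph{given} normal Hall subgroup; it cannot be used to show that $\langle t\rangle$ is normal, and It\^o's theorem only gives that $G$ is metabelian. A factorisation $G=AB$ into cyclic subgroups of coprime orders with $A\cap B=1$ does not by itself force $A\trianglelefteq G$: in the Frobenius group of order $21$ one may take $A$ to be a non-normal subgroup of order $3$ and $B$ the normal subgroup of order $7$. What excludes such configurations in the skew-morphism setting is that $\langle s\rangle$ is the stabiliser of $0$ in a faithful transitive action and hence core-free in $G$ --- an input your argument never invokes. Finally, the case $\gcd(\phi(n),n)=1$ is dismissed as ``analogous'', but it is genuinely different: there $|s|$ need not be coprime to $n$, and one must combine the first assertion of the lemma with further structural information about $G$. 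In short, the architecture (kernel of the power function, skew product group, normality of $\langle t\rangle$) is the standard one and matches the cited sources in spirit, but as written the proposal does not contain a proof of either assertion.
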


In the next lemma we collect a number of properties of skew-morphisms of $\Z_{p^e}$ proved in \cite{KN}. 
In fact, part (i) is \cite[Lemma~4.4(iii)]{KN}, (ii) is 
\cite[Lemma~4.4(ii)]{KN}, and (iii) is \cite[Theorem~4.1]{KN}.

\begin{lem}\label{KN}
Let $s\in\Skew(\Z_{p^e})$ be a skew-morphism of order $p^i$ with power function $\pi$. 
\begin{enumerate}[(i)]
\item The power $s^p$ is also in $\Skew(\Z_{p^e})$.
\item For all $x\in\Z_{p^e},$ $\pi(x)\equiv 1\pmod p$. In particular, 
if $s$ has order $p,$ then $s$ is in $\Aut(\Z_{p^e})$.
\item There exists an automorphism $\alpha\in\Aut(\Z_{p^e})$ of 
order $p^i$ such that the $\langle\alpha\rangle$-orbits  
coincide with the $\langle s\rangle$-orbits.  
\end{enumerate}
\end{lem}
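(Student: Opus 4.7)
My plan is to organise the proof around the skew-product group $G=\langle t,s\rangle$, which by hypothesis has order $p^e\cdot p^i=p^{e+i}$ and therefore is a $p$-group admitting the factorisation $G=\langle t\rangle\langle s\rangle$ with $\langle t\rangle\cap\langle s\rangle=1$. As soon as $i\ge 1$, this factorisation forces $G$ to be noncyclic, so the Frattini quotient $G/\Phi(G)$ is elementary abelian of rank exactly two. The plan is to prove (ii) first via this Frattini picture, derive (i) as a direct consequence, and settle (iii) by induction on $i$.

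For (ii), I first locate a maximal subgroup $N$ of $G$ that contains $\langle t\rangle$: the image of $\langle t\rangle$ in the two-dimensional vector space $G/\Phi(G)$ over the field with $p$ elements is cyclic, hence contained in some one-dimensional subspace, and the preimage of that subspace is a maximal subgroup $N\supseteq\langle t\rangle$ of index $p$. Since $s\notin N$ (else $G\subseteq N$), the quotient map $\varphi\colon G\to G/N\cong\Z_p$ sends $t$ to $0$ and $s$ to a nonzero element. Computing $\varphi(st^c)$ in two ways then gives (ii): on one hand $\varphi(st^c)=\varphi(s)+\varphi(t^c)=\varphi(s)$; on the other hand, the commutation relation~(\ref{cr}) gives $st^c=t^{s(c)}s^{\pi(c)}$, so $\varphi(st^c)=\pi(c)\varphi(s)$. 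Equating and using $\varphi(s)\neq 0$ in the field $\Z_p$ yields $\pi(c)\equiv 1\pmod p$ for every $c\in\Z_{p^e}$. The ``in particular'' for $|s|=p$ then reads $\pi\equiv 1\pmod{|s|}$, so $s$ is a group homomorphism, hence lies in $\Aut(\Z_{p^e})$.

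For (i), I verify the criterion~(\ref{criterion}) for $s^p$: trivially $s^p(0)=0$, and the order of $\langle t,s^p\rangle$ is pinned to $p^{e+i-1}=p^e|s^p|$. From below, $\langle t,s^p\rangle$ contains the subset $\langle t\rangle\langle s^p\rangle$ of cardinality $p^e\cdot p^{i-1}/|\langle t\rangle\cap\langle s^p\rangle|=p^{e+i-1}$, using $\langle t\rangle\cap\langle s^p\rangle\subseteq\langle t\rangle\cap\langle s\rangle=1$. From above, $\langle t,s^p\rangle$ is contained in the maximal subgroup $N$ of part~(ii), because $t\in N$ and $s^p\in N$ (the latter since $sN$ has order $p$ in $G/N$). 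Hence $|\langle t,s^p\rangle|=p^e\cdot|s^p|$, and~(\ref{criterion}) yields $s^p\in\Skew(\Z_{p^e})$.

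For (iii), I induct on $i$; the cases $i\le 1$ are immediate (take $\alpha=s$, using part~(ii) for $i=1$). For the inductive step, part~(i) gives that $s^p\in\Skew(\Z_{p^e})$ has order $p^{i-1}$, so induction supplies an automorphism $\beta\in\Aut(\Z_{p^e})$ of that order whose $\langle\beta\rangle$-orbits coincide with the $\langle s^p\rangle$-orbits. The remaining task is to select $\alpha\in\Aut(\Z_{p^e})$ of order $p^i$ whose $p$-th power generates $\langle\beta\rangle$ and whose $\langle\alpha\rangle$-orbits equal the $\langle s\rangle$-orbits. I expect this matching step to be the main obstacle: one must trace how $s$ fuses the $\langle s^p\rangle$-orbits (precisely $p$ of them merging into each $\langle s\rangle$-orbit whose length actually grows) and then exhibit, among the $p$-th roots of $\beta$ inside $1+p\Z_{p^e}$, a unit $a$ whose multiplication action produces the same fusion. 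I anticipate needing both~(\ref{cr}) and part~(ii), the latter controlling the residue of $\pi$ modulo $p$ and thereby how $s$ moves around within each $\langle s^p\rangle$-orbit.
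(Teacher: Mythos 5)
The paper does not actually prove this lemma: it is imported from \cite{KN} (parts (i) and (ii) are Lemma~4.4 there, part (iii) is Theorem~4.1), so there is no in-paper argument to compare against and your proposal must stand on its own. Parts (i) and (ii) do: the Frattini-quotient argument is correct and self-contained. For $i\ge 1$ the group $G=\langle t\rangle\langle s\rangle$ is a noncyclic two-generated $p$-group, a maximal subgroup $N$ with $t\in N$, $s\notin N$ exists, and applying the homomorphism $G\to G/N\cong\Z_p$ to both sides of $st^c=t^{s(c)}s^{\pi(c)}$ gives $\varphi(s)=\pi(c)\varphi(s)$ with $\varphi(s)\neq 0$, hence $\pi(c)\equiv 1\pmod p$; the sandwich $p^{e+i-1}\le|\langle t,s^p\rangle|\le |N|=p^{e+i-1}$ together with \eqref{criterion} then yields (i). (You should say explicitly that the case $i=0$ is trivial, since the Frattini picture degenerates there.)

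Part (iii), however, is not proved: you set up the induction and then explicitly leave the inductive step open (``I expect this matching step to be the main obstacle'', ``I anticipate needing\dots''), and that step is the whole content of the statement --- it is a stand-alone theorem in \cite{KN}, not a routine verification. To see what is missing concretely: since $\Aut(\Z_{p^e})$ is cyclic for odd $p$, it has a unique subgroup of order $p^i$, namely $\{x\mapsto ux:\ u\equiv 1\!\pmod{p^{e-i}}\}$ by \eqref{ord=pi}, so (iii) is exactly the assertion that the $\langle s\rangle$-orbit of each $x$ is the coset $x+p^{e-i}\gcd(x,p^e)\Z_{p^e}$. Your induction reduces this to showing that whenever an $\langle s^p\rangle$-orbit grows under $s$, the $p$ cosets of $p^{e-i+1}\gcd(x,p^e)\Z_{p^e}$ that fuse into one $\langle s\rangle$-orbit are precisely those filling out a single coset of $p^{e-i}\gcd(x,p^e)\Z_{p^e}$; nothing you have established (the residue of $\pi$ modulo $p$, the relation \eqref{cr}, or the existence of $\beta$) rules out a different fusion pattern, and no candidate computation is offered. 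As written, (iii) remains a plan with its decisive step unexecuted, so the lemma is not yet proved.
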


It is well-known that $\Aut(\Z_{p^e})\cong \Z_{(p-1)p^{e-1}},$ and the automorphisms of order $p^i\, (1\le i\le e-1)$ are given as 
\begin{equation}\label{ord=pi}
x \mapsto (kp^{e-i}+1)x
\end{equation}
where $k\in\{1,\ldots,p^i-1\}$ and $p\nmid k$ (see, e.g.\ \cite{JJ}).
This implies that, for any $k\in\{0,\ldots,p^n-1\},$ 
\begin{equation}\label{gcd}
\gcd((p+1)^k-1,p^e)=p\cdot \gcd(k,p^{e-1}).
\end{equation}

\begin{lem}\label{KN2}
Let $s\in\Skew(\Z_{p^e})$ be a skew-morphism of order $p^i$. 
Then $\gcd(s(1)-1,p^e)=p^{e-i}$.
\end{lem}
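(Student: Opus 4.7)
The case $i=0$ is immediate since $s$ is then the identity. For $i\ge 1$ I would use Lemma~\ref{KN}(iii) twice---once on $s$, once on its $p$-th power $s^p$---and extract the required divisibility via the lifting-the-exponent identity for odd primes.

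The first step is to invoke Lemma~\ref{KN}(iii) on $s$ to obtain an automorphism $\alpha\in\Aut(\Z_{p^e})$ of order $p^i$ sharing its orbits with $s$. By \eqref{ord=pi}, $\alpha$ has the form $x\mapsto(1+kp^{e-i})x$ with $p\nmid k$, and the orbit $O$ of $1$ has size $p^i$. Writing $s^j(1)=\alpha^{m_j}(1)=(1+kp^{e-i})^{m_j}$ for $0\le j<p^i$ yields a set bijection $j\mapsto m_j$ of $\{0,1,\ldots,p^i-1\}$, and the lifting-the-exponent identity $v_p((1+u)^m-1)=v_p(u)+v_p(m)$ (valid for odd $p$ and $p\mid u$, which applies here since $i\le e-1$) gives
\begin{equation*}
v_p\bigl(s^j(1)-1\bigr)=(e-i)+v_p(m_j).
\end{equation*}
The conclusion $\gcd(s(1)-1,p^e)=p^{e-i}$ therefore reduces to showing $p\nmid m_1$.

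To handle this I would apply Lemma~\ref{KN}(i) and (iii) to $s^p$: since $s^p$ is a skew-morphism of order $p^{i-1}$, it has an associated automorphism $\beta$ of order $p^{i-1}$ with the same orbits. By \eqref{ord=pi} again, $\beta(x)=(1+k'p^{e-i+1})x$ with $p\nmid k'$, so the $\langle\beta\rangle$-orbit of $1$---which coincides with the $\langle s^p\rangle$-orbit $\{s^{\ell p}(1):0\le\ell<p^{i-1}\}$---is contained in $1+p^{e-i+1}\Z_{p^e}$. The displayed identity then forces $v_p(m_{\ell p})\ge 1$ for every $\ell$, i.e., $\{m_{\ell p}:0\le\ell<p^{i-1}\}\subseteq p\Z_{p^i}$. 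Both sides have cardinality $p^{i-1}$, so this inclusion is actually an equality.

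The claim now follows by complementation: since $j\mapsto m_j$ restricts to a bijection between the set of multiples of $p$ in $\Z_{p^i}$ and itself, it also restricts to a bijection on the complement $\Z_{p^i}\setminus p\Z_{p^i}$, and in particular $p\nmid m_1$. The step requiring the most care is the cardinality argument comparing $\{m_{\ell p}\}$ with $p\Z_{p^i}$: the automorphisms $\alpha$ and $\beta$ furnished by the two applications of Lemma~\ref{KN}(iii) are not \emph{a priori} related, and it is precisely this counting that transfers the information we have about $s^p$ back to $s$.
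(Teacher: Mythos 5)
Your proposal is correct and follows essentially the same route as the paper: both apply Lemma~\ref{KN}(iii) to $s$ and (via Lemma~\ref{KN}(i)) to $s^p$, identify the respective orbits of $1$ as $1+p^{e-i}\Z_{p^e}$ and $1+p^{e-i+1}\Z_{p^e}$, and conclude that $s(1)$ lies in the first but not the second. The only difference is cosmetic: where you establish $p\nmid m_1$ by the lifting-the-exponent valuation plus a cardinality/complementation argument on the exponents $m_j$, the paper gets the same exclusion directly from the regularity of $\langle s\rangle$ on its orbit of $1$.
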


\begin{proof} The statement is clear if $i=0,$ let $i\ge 1$.   
According to Lemma~\ref{KN}(iii), 
the orbit of $1$ under $\langle s\rangle$ is equal to the orbit 
of $1$ under $\langle\alpha\rangle$ for some automorphism 
$\alpha$ of order $p^i$.  By \eqref{ord=pi}, this orbit is  
$\Omega:=\{xp^{e-i}+1 : x\in\{0,\ldots,p^i-1\}\}$. 
This in turn implies that $\langle s\rangle$ is regular on $\Omega,$ 
and thus $s(1)$ is not in the orbit of $1$ under $\langle s^p\rangle$. 
Since $s^p$ is a skew-morphism, see Lemma~\ref{KN}(i), it follows 
that $s(1)\notin \{xp^{e-i+1}+1 : x\in\{0,\ldots,p^{i+1}-1\}\}$.  
The lemma is proved. 
\end{proof}

We end the section with a property of metacyclic $p$-groups. 

\begin{lem}\label{order}
Let $G$ be a metacyclic $p$-group given by the presentation 
$$
G=\big\langle x,y \mid x^{p^m}=y^{p^n}=1, x^y=x^{1+p^{m-n}}\big\rangle
$$
where $m > n\ge 1$. Then the order of the element $x^iy^j$ 
is equal to $\max\{|x^i|,|y^j|\}$ where  $|x^i|$ and $|y^j|$ denote 
the order of $x^i$ and $y^j,$ respectively.
\end{lem}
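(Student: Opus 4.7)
The plan is to observe first that $|x^iy^j|$ is necessarily a power of $p$ (as $G$ is a finite $p$-group), so it suffices to find the smallest $\ell\ge 0$ with $(x^iy^j)^{p^\ell}=1$. Setting $c:=1+p^{m-n}$, so that conjugation by $y$ on $\langle x\rangle$ acts as $x\mapsto x^{c^{\pm 1}}$ (the sign depending on which side one interprets $x^y$; both interpretations lead to the same $p$-adic valuations below), a routine induction on $k$ yields the normal form
\[
(x^iy^j)^k \;=\; x^{iS_k}\, y^{jk},\qquad
S_k \;=\; \sum_{r=0}^{k-1} c^{jr} \;=\; \frac{c^{jk}-1}{c^j-1}.
\]
Thus the order of $x^iy^j$ is the least $k\ge 1$ satisfying both $p^n\mid jk$ and $p^m\mid iS_k$.

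Next, write $v_p$ for the $p$-adic valuation and set $a:=v_p(j)$, $b:=v_p(i)$ (with $v_p(0)=+\infty$), so that $|y^j|=p^{n-a}$ and $|x^i|=p^{m-b}$. The degenerate cases $i=0$ or $j=0$ follow at once from the normal form, so assume both $0<i<p^m$ and $0<j<p^n$. The crucial step is to compute $v_p(S_k)$. Because $p$ is odd and $v_p(c-1)=m-n\ge 1$, the Lifting-the-Exponent Lemma gives
\[
v_p(c^N-1) \;=\; (m-n)+v_p(N)\qquad\text{for every }N\ge 1.
\]
Applied to $N=jk$ and $N=j$ and then subtracted, this yields the clean identity $v_p(S_k)=v_p(k)$.

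Substituting $k=p^\ell$, the two divisibility conditions become $\ell\ge n-a$ (from $p^n\mid jp^\ell$) and $\ell\ge m-b$ (from $p^m\mid iS_{p^\ell}$). The least $\ell$ satisfying both is $\max(n-a,\,m-b)$, and therefore
\[
|x^iy^j| \;=\; p^{\max(n-a,\,m-b)} \;=\; \max\bigl(|x^i|,\,|y^j|\bigr),
\]
as required. The main obstacle is the clean LTE computation of $v_p(S_k)$; once that identity is in place, the remainder is routine bookkeeping of the two divisibility conditions.
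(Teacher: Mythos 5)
Your proof is correct, but it takes a genuinely different route from the paper. The paper argues by induction on $m$: it passes to the quotient by $N=\langle x^{p^{m-1}},y^{p^{n-1}}\rangle\cong\Z_p^2$, applies the induction hypothesis there, and rules out an element of order $p$ in $\langle x^iy^j\rangle$ outside $N$ by invoking the fact that every subgroup of a metacyclic group is metacyclic (so $G$ cannot contain a non-cyclic subgroup of order $p^3$ and exponent $p$); the base case $m=2$ is the non-abelian group of order $p^3$ and exponent $p^2$, handled by direct computation. You instead compute directly: the normal form $(x^iy^j)^k=x^{iS_k}y^{jk}$ reduces the order to two divisibility conditions, and the Lifting-the-Exponent identity $v_p(c^N-1)=v_p(c-1)+v_p(N)$ (valid since $p$ is odd and $p\mid c-1$) gives the key fact $v_p(S_k)=v_p(k)$, from which the answer $p^{\max\{n-v_p(j),\,m-v_p(i)\}}$ drops out. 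Your handling of the conjugation convention is fine, since the two versions of $S_k$ differ by a unit factor $c^{-j(k-1)}$ and hence have the same valuation. Your approach is more self-contained -- it avoids the appeal to Huppert's structure theory of metacyclic subgroups and yields the explicit value of the order rather than just the maximum -- and the LTE input is essentially the same elementary fact about orders of units modulo $p^e$ that the paper already records in its equation $\gcd((p+1)^k-1,p^e)=p\cdot\gcd(k,p^{e-1})$. The paper's induction, by contrast, is shorter on computation and generalises more readily to situations where an explicit normal form is awkward. The one point you leave implicit is that the presentation really defines a split extension with $\langle x\rangle\cap\langle y\rangle=1$ (so that $x^{iS_k}y^{jk}=1$ forces both factors to be trivial); this is standard for split metacyclic presentations and is equally implicit in the paper, so it is not a gap worth worrying about.
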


\begin{proof}
We prove the lemma by induction on $m$. If $m=2$ then 
$G$ is the unique non-abelian group of order $p^3$ and of 
exponent $p^2$. In this case the lemma follows by a straightforward 
computation. 

Let $m\ge 3$ and $N=\langle x^{p^{m-1}},y^{p^{n-1}} \rangle$. Notice that, $N\cong\Z_p^2,$ $N$ is normal in $G,$ and the factor group $G/N$ admits the presentation 
$$
G/N=\big\langle \bar{x},\bar{y} \mid 
\bar{x}^{p^{m-1}}=\bar{y}^{p^{n-1}}=1, \bar{x}^{\bar{y}}=
\bar{x}^{1+p^{m-n}}\big\rangle,
$$
where for $g \in G,$  $\bar{g}$ denotes the image of $g$ 
under the natural homomorphism $G\to G/N$. 
The lemma holds trivially if either $x^i=1$ or $y^j=1,$ hence 
we assume below that $x^i\ne 1$ and $y^j\ne 1$.
Then we find, using the induction hypothesis, that the order of $\bar{x}^i\bar{y}^j$ is equal to $\max\{|\bar{x}^i|,|\bar{y}^j|\}=\frac{1}{p}\max\{|x^i|,|y^j|\}$. Therefore, we are done if we show that 
$\bar{x}^i\bar{y}^j$ has order $\frac{1}{p}|x^iy^j|$ 
where $|x^iy^j|$ denotes the order of $x^iy^j$. 
Obviously, $x^iy^j\ne 1$. Let $z$ be an element in $\langle x^iy^j\rangle$ of order $p$.  If $z\notin N,$ then $\langle N,z\rangle$ is isomorphic to a group of order $p^3$ and of exponent $p$. This 
implies that $\langle N,z\rangle$ is not metacyclic, contradicting the 
fact that every subgroup of $G$ is metacyclic (cf. \cite[III.11.1]{H67}).
Therefore, $z\in N,$ and thus $\bar{x}^i\bar{y}^j$ has order 
$\frac{1}{p}|x^iy^j|,$ as it is required.
\end{proof}

\section{The skew-morphisms $s_{i,j}$}
For the rest of the paper let $a\in\Aut(\Z_{p^e})$ be the 
automorphism defined by  $a(x)=(p+1)x$. By \eqref{ord=pi}, $a$ has order  $p^{e-1}$. 
Consider the permutation $ta^j \in \Sym(\Z_{p^e})$ for some
$j\in\{0,1,\ldots,p^{e-1}-1\}$. Recall that 
$t$  is the translation $t(x)=x+1$. By Lemma~\ref{order}, $ta^j$  
has order $p^e,$ and thus it is a full cycle. Therefore, there 
exists a unique permutation $b_j\in\Sym(\Z_{p^e})$ such that $b_j(0)=0$ and the conjugate 
\begin{equation}
t^{b_j}:=b_jtb_j^{-1}=ta^j. 
\end{equation}
In fact, for $x\in \Z_{p^e}$ with $x\ne 0,$ the permutation $b_j$ can be expressed as
\begin{equation}\label{bj}
b_j(x)=1+(p+1)^j+\cdots+(p+1)^{(x-1)j}.
\end{equation}

Define the permutations
\begin{equation}\label{sij}
s_{i,j}=b_j^{-1} a^i  b_j, \quad i,j \in \{0,1,\ldots,p^{e-1}-1\}.
\end{equation}

\begin{prop}\label{sij=sm}
Every permutation $s_{i,j}$ defined in \eqref{sij} is a skew-morphism of 
$\Z_{p^e}$.
\end{prop}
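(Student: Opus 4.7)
The plan is to find a power function $\pi$ directly so that $s:=s_{i,j}$ satisfies $s(x+y)=s(x)+s^{\pi(x)}(y)$. The condition $s(0)=0$ is immediate from $b_j(0)=0$. The key tool is the cocycle identity
\[
b_j(x+y)=b_j(x)+(p+1)^{xj}\,b_j(y),
\]
which follows from $b_jtb_j^{-1}=ta^j$ (or directly from \eqref{bj}). Applying $b_j$ to both sides of the desired skew-morphism equation, and using the cocycle together with $b_j(s^k(y))=(p+1)^{ik}b_j(y)$, the skew-morphism equation reduces to the congruence
\[
i\,\pi(x)\equiv i+(x-s(x))\,j\pmod{p^{e-1}},
\]
where $p^{e-1}$ is the order of $p+1$ modulo $p^e$.

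This congruence is solvable for $\pi(x)$ (well-defined modulo $|s|=p^{e-1}/d_1$ with $d_1:=\gcd(i,p^{e-1})$) precisely when $d_1$ divides $(x-s(x))j$. I would prove the stronger statement that $d_1$ divides $x-s(x)$ for every $x\in\Z_{p^e}$. Writing $b_j(s(x))-b_j(x)=((p+1)^i-1)b_j(x)$ and applying the cocycle to the left-hand side yields
\[
b_j(s(x)-x)=(p+1)^{-xj}\bigl((p+1)^i-1\bigr)\,b_j(x),
\]
and \eqref{gcd} gives that the highest power of $p$ dividing $(p+1)^i-1$ is exactly $pd_1$.

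The main obstacle is then a $p$-adic lemma: for every nonzero $y\in\Z_{p^e}$, the highest power of $p$ dividing $b_j(y)$ equals the highest power of $p$ dividing $y$. For $p\nmid y$ this is immediate from $b_j(y)\equiv y\pmod p$; for $y=py'$ the block-summation identity $b_j(py')=b_{pj}(y')\cdot b_j(p)$ (splitting the sum into blocks of length $p$), combined with the fact that $p$ exactly divides $b_j(p)$ (from the binomial expansion of each $(p+1)^{nj}$), drives an induction on the $p$-adic valuation of $y$. Granted this lemma, the displayed equation forces $s(x)-x$ to be divisible by $pd_1$, hence by $d_1$; the congruence for $\pi(x)$ is then solvable, and any integer lift $\pi(x)$ witnesses that $s_{i,j}$ is a skew-morphism.
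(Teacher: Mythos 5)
Your argument is correct, but it proves the proposition by a genuinely different route than the paper. The paper's proof is a two-line application of the criterion \eqref{criterion}: since $s_{i,j}(0)=0$ and conjugation by $b_j$ identifies $\langle t,s_{i,j}\rangle$ with $\langle ta^j,a^i\rangle$, one reads off $|\langle t,s_{i,j}\rangle|=p^{2e-1}=p^e\cdot|s_{i,j}|$ when $p\nmid i$, and the case $p\mid i$ is handled by writing $s_{i,j}=s_{i',j}^{p^k}$ and invoking Lemma~\ref{KN}(i). You instead construct the power function explicitly: your reduction of the skew-morphism identity to the congruence $i\,\pi(x)\equiv i+(x-s(x))j\pmod{p^{e-1}}$ is sound (note it specialises at $x=1$ to the identity \eqref{aij} used later in the proof of Lemma~\ref{part1}), your cocycle identity for $b_j$ is immediate from \eqref{bj}, and the solvability reduces, as you say, to $\gcd(i,p^{e-1})\mid(x-s(x))$, which you get from \eqref{gcd} together with the valuation lemma $v_p(b_j(y))=v_p(y)$. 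That lemma is true and your block-summation induction $b_j(py')=b_{pj}(y')\,b_j(p)$ with $b_j(p)\equiv p\pmod{p^2}$ (using $p$ odd) proves it; it is essentially a sharpening of Claim~(c) in the proof of Lemma~\ref{bj-2} (the inclusion $b_j(p^k\Z_{p^e})\subseteq p^k\Z_{p^e}$ plus bijectivity of $b_j$ would give it too). What your approach buys is independence from the criterion \eqref{criterion} of \cite{KN} and from Lemma~\ref{KN}(i), plus an explicit formula for $\pi$ that actually yields more (e.g.\ it gives $p\cdot\gcd(i,p^{e-1})\mid s_{i,j}(x)-x$, which is Lemma~\ref{sijx}); the cost is the extra $p$-adic lemma and a longer computation where the paper gets away with a one-line order count.
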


\begin{proof} By definition the order $|s_{i,j}|=|a^i|$. 
Assume at first that $p\nmid i,$ or 
equivalently, $s_{i,j}$ has order $p^{e-1}$.
It is clear that $s_{i,j}(0)=0$. Thus by \eqref{criterion}, it is 
enough to show that $|\langle t,s_{i,j}\rangle|=p^{2e-1}$.
We have
$$
|\langle t,s_{i,j}\rangle| = |\langle t,s_{i,j}\rangle^{b_j}| =
|\langle t^{b_j},s_{i,j}^{b_j}\rangle| = |\langle ta^j,a^i\rangle| = p^{2e-1},
$$
therefore $s_{i,j}$ is a skew-morphism.

Now, suppose that $i=p^ki',\, p\nmid i'$. As $s_{i',j}$ is a skew-morphism of $\Z_{p^e},$ and $s_{i,j}=s_{i',j}^{p^k},$ it follows from 
Lemma~\ref{KN}(i) that $s_{i,j}$ is a skew-morphism too. 
\end{proof}

Notice that, the skew-morphism $s_{i,j}$ is not  uniquely determined by 
the parameters $i$ and $j$. For instance, $s_{0,j}$ is the identity mapping for every $j$. The rest of the section is devoted to the proof of following theorem.

\begin{thm}\label{T1}
Let $e\ge 2,$ and $i,i',j,j'\in\{0,1,\ldots,p^{e-1}-1\}$. Then
$$ 
s_{i,j}=s_{i',j'}\;\text{if and only if}\; i=i'\;\text{and}\;j \equiv j' \pmod { p^{e-2}/\gcd(i,p^{e-2})}.
$$ 
\end{thm}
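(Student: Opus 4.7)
The plan is to extract an algebraic criterion for $s_{i,j}=s_{i',j'}$ from the relation $b_j(s_{i,j}(x))=q^i b_j(x)$, where $q=p+1$, together with the telescoping $b_j(x)=(q^{xj}-1)/(q^j-1)$ coming from \eqref{bj}. This yields
\[
s_{i,j}(x)=y \iff q^{yj}\equiv q^{xj+i}-q^i+1\pmod{p^e}.
\]
Setting $N:=j'-j$ and eliminating $q^{yj}$ between the defining equations of $s_{i,j}$ and $s_{i,j+N}$ (for a fixed $i=i'$) reduces the identity $s_{i,j+N}=s_{i,j}$ to the congruence
\[
q^{xj+i}\bigl(q^{(y-x)N}-1\bigr)\equiv (q^i-1)(q^{yN}-1)\pmod{p^e}, \qquad (\star)
\]
required to hold for all $x\in\Z_{p^e}$, with $y=s_{i,j}(x)$.

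For sufficiency, assume $i=i'$ and $v_p(N)\ge e-2-r$ where $r:=v_p(i)$. Since $b_j$ acts as the identity modulo $p$, a short calculation shows that $b_j^{-1}$ preserves $p$-adic valuations of differences, giving $v_p(y-x)=v_p((q^i-1)b_j(x))=r+1+v_p(x)$ for $x$ not a fixed point of $s_{i,j}$. Hence $v_p((y-x)N)\ge e-1$, and by \eqref{gcd}, $q^{(y-x)N}\equiv 1\pmod{p^e}$, so the left side of $(\star)$ vanishes. Similarly, $v_p((q^i-1)(q^{yN}-1))\ge (r+1)+(1+v_p(y)+v_p(N))\ge e+v_p(y)\ge e$, so the right side vanishes as well.

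For necessity, suppose $s_{i,j}=s_{i',j'}$. The shared order $p^{e-1}/\gcd(i,p^{e-1})$ forces $v_p(i)=v_p(i')=:r$ and $|s_{i,j}|=p^f$ with $f=e-1-r$. From $s_{i,j}^m(1)=b_j^{-1}(q^{mi})$, equality of permutations gives $b_j^{-1}(q^{mi})=b_{j'}^{-1}(q^{mi'})$ for every $m$; taking $m=1$ and writing $x:=s_{i,j}(1)=1+p^{r+1}\sigma$ (valid by Lemma~\ref{KN2}), we obtain $b_j(x)=q^i$ and $b_{j'}(x)=q^{i'}$, which unfold to
\[
q^{p^{r+1}\sigma j}-1=q^{-j}(q^i-1)(q^j-1), \qquad q^{p^{r+1}\sigma j'}-1=q^{-j'}(q^{i'}-1)(q^{j'}-1).
\]
Expanding via the standard identity $(1+p)^{p^m u}\equiv 1+up^{m+1}\pmod{p^{m+2}}$ (for odd $p$ and $\gcd(u,p)=1$), comparison of leading $p$-adic coefficients forces $\sigma$ coprime to $p$ and $\sigma\equiv i/p^r\equiv i'/p^r\pmod{p}$, so $i\equiv i'\pmod{p^{r+1}}$; inductively matching successive $p$-adic orders in both equations pins down $i=i'$ in $\Z_{p^{e-1}}$. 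Once $i=i'$ is established, the congruence $(\star)$ must hold for every $x$; applying it at $x=1$ and reversing the valuation estimates used in sufficiency extracts the required lower bound $v_p(N)\ge e-2-r$, which is equivalent to $j\equiv j'\pmod{p^{e-2}/\gcd(i,p^{e-2})}$.

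The principal technical obstacle is the inductive $p$-adic bookkeeping in the necessity direction: one must carefully track subleading corrections in the binomial expansion of $(1+p)^k$ across multiple orders of $p$, and handle edge cases where $v_p(i)$ or $v_p(j)$ approach the boundaries of their domains (and, for $p=3$, where higher binomial-coefficient terms in $(1+p)^{p^m}$ become non-negligible).
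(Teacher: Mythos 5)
Your overall strategy --- replacing the paper's group-theoretic manipulations inside $\langle t,a\rangle$ by explicit congruences obtained from the closed form $(q^j-1)b_j(x)=q^{xj}-1$ --- founders on one recurring point: multiplication by $q^j-1$ is not injective on $\Z_{p^e}$, since $p^{1+v_p(j)}$ divides $q^j-1$ by \eqref{gcd}. Consequently your opening ``criterion'' $s_{i,j}(x)=y \iff q^{yj}\equiv q^{xj+i}-q^i+1 \pmod{p^e}$ is only an implication from left to right: the congruence determines $yj$ modulo $p^{e-1}$ at best, hence $y$ only modulo $p^{e-1}/\gcd(j,p^{e-1})$, and for $j=0$ it carries no information at all. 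This breaks the sufficiency direction outright: establishing $(\star)$ for all $x$ only shows that $b_{j'}(s_{i,j}(x))-q^i\,b_{j'}(x)$ is annihilated by $q^{j'}-1$, not that it vanishes, so it does not prove $s_{i,j'}=s_{i,j}$. The paper has to work precisely at this point, proving the exact identity $(b_{j'}-b_j)s_{i,j}=a^i(b_{j'}-b_j)=b_{j'}-b_j$ via Lemma~\ref{bj-2}; that is exactly the content your multiplication by $q^{j'}-1$ discards.

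The necessity direction has two further gaps. First, the single equation $b_j(s_{i,j}(1))=q^i$ determines $q^i$ only up to the same ambiguity (you divide by $q^j-1$ again), so ``inductively matching successive $p$-adic orders'' from the one relation you display cannot by itself pin down $i=i'$; this step is asserted, not carried out, and would need either more orbit points or the paper's device of inducting on the order via $s_{i,j}^p=s_{ip,j}$ together with relation \eqref{aij}. Second, your proposed extraction of $v_p(j'-j)\ge e-2-r$ from $(\star)$ at $x=1$ fails at the order you invoke: writing $y=1+p^{r+1}\sigma$, $N_0=N/p^{v_p(N)}$ and $i_0=i/p^r$, both sides of $(\star)$ have $p$-adic valuation $r+2+v_p(N)$ with leading coefficients $\sigma N_0$ and $i_0N_0$ respectively, so the leading-order comparison collapses to $\sigma\equiv i_0\pmod p$ --- a relation you already had, containing no information about $N$. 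The paper instead reads off $a^{(j-j')(s_{i,j}(1)-1)}=1$ inside the skew product group and applies Lemma~\ref{KN2}, which yields the bound in one line. In short, the skeleton is a legitimate alternative computation, but each of the three pivotal steps currently rests on an inference that the non-invertibility of $q^j-1$ either invalidates or leaves unproved.
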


The theorem will be derived in a sequence of lemmas.

\begin{lem}\label{bj-2}
Let $e\ge 2,$ and $j,j'\in\{0,1,\ldots,p^{e-1}-1\}$ such that
$j\equiv j'\pmod{p^u}$ for some $u\in\{0,1,\ldots,e-1\}$. Then
for all $x,x'\in\Z_{p^e}$ with $x\equiv x'\pmod{p^{e-1-u}},$
$$
b_{j'}(x)-b_j(x')=b_{j'}(x)-b_j(x).
$$
\end{lem}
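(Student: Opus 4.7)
\emph{Proof plan.} My plan is to read the lemma's stated equality as the congruence
$b_{j'}(x)-b_j(x) \equiv b_{j'}(x')-b_j(x')\pmod{p^e}$,
i.e., the assertion that $\Delta(y) := b_{j'}(y)-b_j(y)$ is constant (mod $p^e$) on each residue class modulo $p^{e-1-u}$. First I would reduce by telescoping to the single-step statement $\Delta(x+p^{e-1-u})\equiv\Delta(x)\pmod{p^e}$ for every integer $x$. Splitting the geometric sum in \eqref{bj} immediately gives the additive identity
$b_j(y+z) = b_j(y) + (p+1)^{jy}\,b_j(z)$,
and analogously for $b_{j'}$; substituting $y=x$, $z=p^{e-1-u}$, subtracting and cancelling $\Delta(x)$ would reduce the one-step claim to
\[
(p+1)^{j'x}\,b_{j'}(p^{e-1-u}) \equiv (p+1)^{jx}\,b_j(p^{e-1-u}) \pmod{p^e}.
\]

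Next I would pin down $b_j(p^{e-1-u})$ modulo $p^e$. Writing $v_p$ for the $p$-adic valuation and applying \eqref{gcd} to the closed form $b_j(p^{e-1-u})=\bigl((p+1)^{jp^{e-1-u}}-1\bigr)/\bigl((p+1)^j-1\bigr)$, one quickly obtains $v_p(b_j(p^{e-1-u}))=e-1-u$ for every $j$; so $b_j(p^{e-1-u})=p^{e-1-u}c_j$ with $\gcd(c_j,p)=1$ (and similarly for $j'$). The displayed congruence then becomes $(p+1)^{j'x}c_{j'}\equiv(p+1)^{jx}c_j\pmod{p^{u+1}}$. Since $j\equiv j'\pmod{p^u}$ implies $jx\equiv j'x\pmod{p^u}$, a further appeal to \eqref{gcd} yields $(p+1)^{jx}\equiv(p+1)^{j'x}\pmod{p^{u+1}}$, so the task reduces to the sub-claim
\[
b_j(p^{e-1-u})\equiv b_{j'}(p^{e-1-u})\pmod{p^e} \quad \text{whenever }j\equiv j'\pmod{p^u}.
\]

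This sub-claim is the heart of the proof. Expanding
$b_{j'}(p^{e-1-u})-b_j(p^{e-1-u})=\sum_{k=0}^{p^{e-1-u}-1}(p+1)^{kj}\bigl((p+1)^{k(j'-j)}-1\bigr)$
and using $p^u\mid j'-j$, by \eqref{gcd} each summand only has $v_p\ge 1+v_p(k)+u$, which falls well short of the required bound $v_p\ge e$. The missing divisibility must come from cancellation across the sum. I would extract it by expanding $(p+1)^{k(j'-j)}-1$ as a power series in $\lambda := (p+1)^{j'-j}-1$, reducing the sum to $\lambda$ times a known finite sum $\sum_k k(p+1)^{kj}$ (plus higher-order corrections in $\lambda$), and then evaluating the resulting sum in closed form; the $p$-adic valuation of this closed form, computed via \eqref{gcd} applied to $(p+1)^{jp^{e-1-u}}-1$, combines with $v_p(\lambda)\ge u+1$ to give exactly $p^e$-divisibility. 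The hard part will be this last book-keeping step: verifying that the leading-order contribution and the higher-order corrections in $\lambda$ both vanish modulo $p^e$, which requires using the exact values (not just lower bounds) of the $p$-adic valuations furnished by \eqref{gcd}.
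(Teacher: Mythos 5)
Your reduction is sound and in fact runs parallel to the paper's own proof: the telescoping step, the additive identity $b_j(y+z)=b_j(y)+(p+1)^{jy}b_j(z)$, the valuation $v_p\bigl(b_j(p^{e-1-u})\bigr)=e-1-u$ (the paper's Claim~(c)), and the observation $(p+1)^{jx}\equiv(p+1)^{j'x}\pmod{p^{u+1}}$ all appear, in essentially the same roles, as Claims (b)--(d) of the paper's argument. (Two small remarks: you correctly read the misprinted display as asserting that $b_{j'}-b_j$ is constant on residue classes mod $p^{e-1-u}$; and to get $v_p\bigl(b_j(p^{e-1-u})\bigr)=e-1-u$ you need the integer-level statement $v_p((p+1)^k-1)=1+v_p(k)$, not just \eqref{gcd}, which only records the valuation truncated at $e$ and is inconclusive when, say, $jp^{e-1-u}$ has valuation $\ge e-1$.)

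The genuine gap is the sub-claim you correctly isolate as the heart of the matter, namely $b_j(p^{e-1-u})\equiv b_{j'}(p^{e-1-u})\pmod{p^e}$ for $j\equiv j'\pmod{p^u}$ (the paper's Claim~(a)). Your plan --- expand $(p+1)^{k(j'-j)}-1$ in powers of $\lambda=(p+1)^{j'-j}-1$ and bound $\sum_{r\ge1}\lambda^rT_r$ with $T_r=\sum_{k<p^{e-1-u}}\binom{k}{r}(p+1)^{jk}$ term by term --- is not carried out, and it is exactly where the difficulty lives: the crude estimates fail (as you note, individual summands only carry $v_p\ge 1+u+v_p(k)$), and even after the $\lambda$-expansion the needed bound $v_p(\lambda^rT_r)\ge e$ requires showing something like $v_p(T_r)\ge e-u-r$, which does not follow from generic valuation counting (e.g.\ the factor $1/r!$ and the case $u=0$ already defeat the naive product-rule estimate) and needs genuine cancellation inside each $T_r$. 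So the proposal defers precisely the step that carries the content of the lemma. The paper avoids this book-keeping entirely with a short structural argument: Claim~(a) is equivalent to $(ta^j)^{p^{e-1-u}}=(ta^{j'})^{p^{e-1-u}}$ in $\langle t,a\rangle$, which is a regular $p$-group (its commutator subgroup is cyclic), so by Huppert III.10.6 this reduces to $\bigl((ta^j)^{-1}ta^{j'}\bigr)^{p^{e-1-u}}=(a^{j_0p^u})^{p^{e-1-u}}=1$, which is immediate. If you want to keep your computational route, you would need to supply a complete evaluation of the $T_r$ (or an equivalent cancellation argument); as written, the proof is incomplete at its decisive point.
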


\begin{proof} 
We divide the proof into four steps.
\medskip

\noindent {\bf Claim (a):} \ $b_j(p^{e-1-u})=b_{j'}(p^{e-1-u})$.
\smallskip

We start by the observation that if $x,y,z\in\Z_{p^e},$  then 
\begin{equation}\label{ijk}
(t^x a^y)^z=t^{x b_y(z)} a^{yz}.
\end{equation}

Then $(t a^j)^{p^{e-1-u}}=t^{b_j(p^{e-1-u})}
a^{jp^{e-1-u}},$ $(t a^{j'})^{p^{e-1-u}}=t^{b_{j'}(p^{e-1-u})} a^{j'p^{e-1-u}}$.
Since $a^{jp^{e-1-u}}=a^{j'p^{e-1-u}}$, Claim (a) is equivalent to
$(t a^j)^{p^{e-1-u}}=(t a^{j'})^{p^{e-1-u}}$.
Recall that, an arbitrary $p$-group $G$ is \emph{regular} if 
for all $x,y\in G,$ $(xy)^p=x^py^pc_1\cdots c_r,$ 
where all $c_i$ belong to the commutator $\langle x,y\rangle'$. In the case when $p>2,$ a sufficient condition for $G$ to be regular is that its commutator subgroup $G'$ is cyclic (see \cite[III.10.2)]{H67}). In particular, it follows that $\langle t,a\rangle$ is a regular $p$-group. 
Thus by \cite[III.10.6]{H67}, $(t a^j)^{p^{e-1-u}}=(t a^{j'})^{p^{e-1-u}}$ is equivalent to $( (t a^j)^{-1} t a^{j'})^{p^{e-1-u}}=1$.
Putting $j'-j=j_0p^u$, for some $j_0$, we get 
$(( t a^j)^{-1} t a^{j'})^{p^{e-1-u}}=
(a^{j_0p^u})^{p^{e-1-u}}=1$.
\medskip

\noindent {\bf Claim (b)} \ $b_j(xp^{e-1-u})=b_{j'}(xp^{e-1-u})$ for all
$x\in\{0,1,\ldots,p^{u+1}-1\}$.
\smallskip

Let $x\in\{0,1,\ldots,p^{u+1}-1\}$. 
Recall that $t^{b_j}=ta^j$. By this and \eqref{ijk}, 
\begin{eqnarray*}
(t^{xp^{e-1-u}})^{b_j}&=&(ta^j)^{xp^{e-1-u}}=t^{b_j(xp^{e-1-u})} a^{xp^{e-1-u}j} \\
(t^{xp^{e-1-u}})^{b_{j'}}&=&(ta^{j'})^{xp^{e-1-u}}=t^{b_{j'}(xp^{e-1-u})} a^{xp^{e-1-u}j'}.
\end{eqnarray*} 
Thus Claim (b) is equivalent to $(t^{xp^{e-1-u}})^{b_j}=(t^{xp^{e-1-u}})^{b_{j'}}$. Since this holds for $x=1$ by Claim (a), it also holds 
for all $x>1$.
\medskip

\noindent {\bf Claim (c)} \ $p^{e-1-u} \mid b_j(xp^{e-1-u})$ for all
$x\in\{0,1,\ldots,p^{u+1}-1\}$.
\smallskip

The order of $t a^j$ is $p^e$.
This implies that $(t a_j)^{xp^{e-u-1}}=t^{b_j(xp^{e-1})} a^{jxp^{e-u-1}}$ has order at most $p^{u+1}$.
As $a^{jxp^{e-u-1}}$ has order at most $p^u$, it follows by Lemma~\ref{order} that
$t^{b_j(xp^{e-u-1})}$ has order at most $p^{u+1}$. This yields (c).
\medskip 

\noindent {\bf Claim (d)} \ $b_{j'}(x')-b_j(x')=b_{j'}(x)-b_j(x)$ for all 
$x,x' \in \Z_{p^n}$ with $x \equiv x' \pmod{p^{e-1-u}}$.
\smallskip 

Put $x'=x+x_0p^{e-1-u}$ and $j'=j+j_0p^u$. By \eqref{bj} and (b),
\begin{eqnarray*}
b_{j'}(x')-b_{j'}(x) &=& b_{j'}(x)+(p+1)^{j'x}b_{j'}(x_0p^{e-1-u})-
b_{j'}(x) \\
&=& (p+1)^{jx} (p+1)^{j_0p^ux} b_{j'}(x_0p^{e-1-u}) \\
&=& (p+1)^{jx} (p+1)^{j_0p^ux} b_{j}(x_0p^{e-1-u}).
\end{eqnarray*}

By \eqref{ord=pi}, $(p+1)^{j_0p^ux} \equiv 1 \pmod {p^{u+1}}$. Thus we find, using also (c), that  $(p+1)^{j_0p^ux}$  $b_{j}(x_0p^{e-1-u})=b_{j}(x_0p^{e-1-u})$. Therefore,
$$ b_{j'}(x')-b_{j'}(x)=
(p+1)^{jx}b_{j}(x_0p^{e-1-u})=b_{j}(x')-b_{j}(x).$$
This completes the proof. 
\end{proof}

\begin{lem}\label{sijx}
Let $e\ge 2$ and $i\in\{0,\ldots,p^{e-1}-1\}$. Then for all 
$x\in\Z_{p^e},$   
$$
s_{i,j}(x)\equiv x\pmod {p\cdot\gcd(i,p^{e-1})}.
$$ 
\end{lem}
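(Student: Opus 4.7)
The plan is to exploit the defining relation coming from \eqref{sij}, namely $b_j\circ s_{i,j}=a^i\circ b_j$, which reads
$$b_j(y)-b_j(x)\equiv\bigl((p+1)^i-1\bigr)\,b_j(x)\pmod{p^e},$$
where $y:=s_{i,j}(x)$. Setting $d=\gcd(i,p^{e-1})$, formula \eqref{gcd} asserts that $(p+1)^i-1$ is divisible by $pd$ in $\Z_{p^e}$, so the right-hand side has $p$-adic valuation at least $v_p(pd)$. To transport this divisibility through $b_j$ back to a divisibility on $y-x$, I would use the telescoping identity
$$b_j(x+z)=b_j(x)+(p+1)^{jx}\,b_j(z),$$
which follows directly from the geometric-sum formula \eqref{bj}. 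Applied with $z=y-x$, and using that $(p+1)^{jx}$ is a unit in $\Z_{p^e}$, the problem reduces to showing that $v_p(b_j(y-x))\ge v_p(pd)$ forces $v_p(y-x)\ge v_p(pd)$.

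The core of the proof is therefore an auxiliary claim that $v_p(b_j(m))=v_p(m)$ for every $m\in\Z_{p^e}$ (with the convention that both valuations are capped at $e$). When $j\equiv 0\pmod{p^{e-1}}$ we have $(p+1)^j\equiv 1\pmod{p^e}$, so $b_j(m)\equiv m$ and the identity is trivial. Otherwise, writing $r=(p+1)^j$, formula \eqref{bj} gives $b_j(m)=(r^m-1)/(r-1)$; since $p$ is odd and $r\equiv 1\pmod p$, the lifting-the-exponent identity $v_p(r^m-1)=v_p(r-1)+v_p(m)$ yields $v_p(b_j(m))=v_p(m)$ on the nose.

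Combining these ingredients, $v_p(y-x)=v_p(b_j(y-x))\ge v_p(pd)$, which is exactly the asserted congruence $s_{i,j}(x)\equiv x\pmod{p\cdot\gcd(i,p^{e-1})}$. The main obstacle I expect is the bookkeeping at the boundary case $d=p^{e-1}$ (equivalently $i\equiv 0\pmod{p^{e-1}}$), where $pd=p^e$: in this regime $a^i$ acts as the identity, hence $s_{i,j}=\mathrm{id}$ and the congruence is vacuous, but one must check that the valuation estimates, which live in $\Z_{p^e}$ rather than in $\Z$, collapse to the correct statement. A secondary point worth verifying is that the telescoping identity holds modulo $p^e$ for all representatives $x,z$ (including wrap-around), which it does since both sides depend only on the residues.
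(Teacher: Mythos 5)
Your proof is correct, but it takes a genuinely different route from the paper. The paper deduces the congruence in two lines from Lemma~\ref{KN}(iii) (imported from \cite{KN}): there is an automorphism $\alpha$ of the same order whose orbits coincide with those of $s_{i,j}$, so $s_{i,j}(x)=a^l(x)$ with $\gcd(i,p^{e-1})\mid l$, and \eqref{gcd} finishes the job. You instead work directly from the conjugation relation $b_j s_{i,j}=a^ib_j$, the cocycle identity $b_j(x+z)=b_j(x)+(p+1)^{jx}b_j(z)$ (which the paper itself uses in Claim~(d) of Lemma~\ref{bj-2}), and the valuation identity $v_p(b_j(m))=v_p(m)$ obtained from lifting-the-exponent applied to $b_j(m)=\bigl((p+1)^{jm}-1\bigr)/\bigl((p+1)^j-1\bigr)$; all the delicate points you flag (the cap of valuations at $e$, well-definedness of $b_j$ and of the cocycle identity on residues, the degenerate cases $j=0$ and $i\equiv 0\pmod{p^{e-1}}$) do check out. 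What your approach buys is independence from the external orbit-structure theorem \cite[Theorem~4.1]{KN}, at the cost of a slightly longer computation; it also isolates the reusable fact that $b_j$ preserves $p$-adic valuation, which strengthens and explains Claim~(c) of Lemma~\ref{bj-2}. The paper's argument is shorter only because the heavy lifting was already done elsewhere.
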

\begin{proof}
By \eqref{sij}, the order of $s_{i,j}$ is $p^{e-1}/\gcd(i,p^{e-1})$. 
By Lemma~\ref{KN}(iii), there exists an automorphism $\alpha \in\Aut(\Z_{p^e})$ of the same order such that the $\langle \alpha\rangle$-orbits coincide with the 
$\langle s_{i,j}\rangle$-orbits. It follows that 
$\alpha \in \langle a^{\gcd(i,p^{e-1})}\rangle$.
Let $x\in\Z_{p^e}$. Then there exists 
some $l\in\{0,\ldots,p^{e-1}-1\}$ with $\gcd(i,p^{e-1}) \mid l$ 
such that $s_{i,j}(x)=a^l(x),$ and hence
\begin{equation}\label{sijx2}
s_{i,j}(x)\equiv (p+1)^l x\pmod{p^e}.
\end{equation}
By \eqref{gcd}, $(p+1)^l\equiv 1\pmod{p\cdot\gcd(l,p^{e-1})}$. 
Since $\gcd(i,p^{e-1})\mid\gcd(l,p^{e-1}),$ it follows that 
$(p+1)^l \equiv 1\pmod{p\cdot\gcd(i,p^{e-1})}$. Substituting this 
in \eqref{sijx2}, the lemma follows.
\end{proof}

\begin{lem}\label{part1}
Let $e\ge 2,$ and $i,j,j'\in\{0,\ldots,p^{e-1}-1\}$. Then
\begin{equation}\label{iff2}
s_{i,j}=s_{i,j'}\; \text{if and only if}\; j \equiv j' \pmod { p^{e-2}/\gcd(i,p^{e-2})}.
\end{equation}
\end{lem}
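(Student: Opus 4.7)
Write $k = \min\{v_p(i),\, e-2\}$, so that $\gcd(i,p^{e-2})=p^k$ and the asserted congruence becomes $j \equiv j' \pmod{p^{e-2-k}}$. Since $s_{0,j}$ is the identity mapping for every $j$, assume throughout that $i \neq 0$.

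For the ``if'' direction I would apply Lemma~\ref{bj-2} with $u = e-2-k$. Writing $c(x) := b_j(x) - b_{j'}(x)$, Claim~(d) in that proof yields $c(x) = c(x')$ whenever $x \equiv x' \pmod{p^{k+1}}$; in parallel, the direct expansion
\begin{equation*}
c(x) = \sum_{l=0}^{x-1}(p+1)^{j'l}\bigl((p+1)^{(j-j')l} - 1\bigr),
\end{equation*}
combined with \eqref{gcd}, shows that each summand has $p$-valuation at least $1 + v_p((j-j')l) \geq e-1-k$, so $p^{e-1-k} \mid c(x)$. By Lemma~\ref{sijx}, $s_{i,j'}(x) \equiv x \pmod{p^{k+1}}$, whence $c(s_{i,j'}(x)) = c(x)$. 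Combined with $b_{j'}(s_{i,j'}(x)) = (p+1)^i b_{j'}(x)$, this yields
\begin{equation*}
b_j(s_{i,j'}(x)) = (p+1)^i b_j(x) - \bigl((p+1)^i - 1\bigr)\, c(x),
\end{equation*}
whose correction term has $p$-valuation at least $(k+1) + (e-1-k) = e$ and so vanishes modulo $p^e$; applying $b_j^{-1}$ gives $s_{i,j'}(x) = s_{i,j}(x)$.

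For the ``only if'' direction, suppose $s_{i,j} = s_{i,j'}$; it suffices to show that already $s_{i,j}(1) = s_{i,j'}(1)$ forces $j \equiv j' \pmod{p^{e-2-k}}$. Write $y_1 := s_{i,j}(1) = s_{i,j'}(1) = 1 + mp^{k+1}$ with $m \in \Z_{p^{e-1-k}}$, the $p$-adic valuation $k+1$ of $y_1-1$ being provided by Lemma~\ref{KN2}. From $b_j(y_1) = (p+1)^i = b_{j'}(y_1)$ together with the identity $b_j(y)((p+1)^j - 1) = (p+1)^{jy} - 1$ (and its $j'$-counterpart), subtraction yields the consolidated equation
\begin{equation*}
b_{j-j'}(y_1) = (p+1)^{\,i - j'(y_1 - 1)}.
\end{equation*}
Since $y_1 - 1 \equiv 0 \pmod{p^{k+1}}$, the exponent on the right has $p$-valuation exactly $k$. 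A $p$-adic comparison of both sides, using \eqref{gcd} and the derived estimate $(1+p)^{p^s} \equiv 1 + p^{s+1} \pmod{p^{s+2}}$, matches coefficients at the successive levels $p^{k+1}, p^{k+2}, \ldots, p^{e-1}$ and forces $v_p(j - j') \geq e - 2 - k$.

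The main obstacle is the $p$-adic bookkeeping in the ``only if'' direction, where one must carefully track the interaction between the leading contribution $mp^{k+1}$ on the left and the expansion of $(1+p)^{i-j'(y_1-1)}$ on the right; the factor of $2$ in quantities such as $y_1(y_1-1)/2$ requires extra attention when $p = 3$, but the constraint $v_p(j-j') \geq e-2-k$ accumulates digit by digit from the inductive matching.
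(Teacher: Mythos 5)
Your ``if'' direction is essentially the paper's own argument written pointwise: you bound the $p$-valuation of $c=b_j-b_{j'}$ by $e-1-k$, invoke Lemma~\ref{bj-2} and Lemma~\ref{sijx} to get $c(s_{i,j'}(x))=c(x)$, and observe that the correction term $((p+1)^i-1)c(x)$ dies modulo $p^e$. That half is correct and needs no further comment.

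The ``only if'' direction is where the proposal breaks down, in two places. First, the ``consolidated equation'' $b_{j-j'}(y_1)=(p+1)^{i-j'(y_1-1)}$ does not follow from $b_j(y_1)=b_{j'}(y_1)=(p+1)^i$. Subtracting the two instances of the identity $b_j(y)((p+1)^j-1)=(p+1)^{jy}-1$ and factoring $(p+1)^{\delta y_1}-1=((p+1)^{\delta}-1)\,b_{\delta}(y_1)$ with $\delta=j-j'$ gives
\begin{equation*}
\bigl((p+1)^{\delta}-1\bigr)\Bigl[(p+1)^{i+j'}-(p+1)^{j'y_1}b_{\delta}(y_1)\Bigr]\equiv 0 \pmod{p^e},
\end{equation*}
and $(p+1)^{\delta}-1$ is a zero divisor in $\Z_{p^e}$ (its valuation is $1+v_p(\delta)$ by \eqref{gcd}); cancelling it only yields your equation modulo $p^{e-1-v_p(\delta)}$, not modulo $p^e$. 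Since the unknown $v_p(\delta)$ is exactly what you are trying to bound, the precision available for the subsequent ``digit matching'' depends on the answer, and the argument becomes delicate. Second, even granting the corrected congruence, the decisive step --- the level-by-level comparison from $p^{k+1}$ up to $p^{e-1}$ that is supposed to force $v_p(\delta)\ge e-2-k$ --- is asserted rather than carried out, and your closing paragraph concedes as much (the interaction between the leading term $mp^{k+1}$ of $y_1-1$ and the expansion of $(p+1)^{i-j'(y_1-1)}$ produces competing contributions at level $p^{k+1}$ that can cancel, so the conclusion is not a one-line valuation count). Note also that you have silently replaced the hypothesis $s_{i,j}=s_{i,j'}$ by the single equation $s_{i,j}(1)=s_{i,j'}(1)$; that reduction may well be true, but it is an additional claim requiring proof. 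The paper avoids all of this by working in the conjugated skew product group $\langle ta^j,a^i\rangle$: the relation \eqref{cr} applied to $x=1$ gives $a^{i+j}=a^{s_{i,j}(1)j+\pi(1)i}$, and since $s_{i,j}=s_{i,j'}$ forces the same $s_{i,j}(1)$ and the same $\pi(1)$ in both instances, subtraction kills the $\pi(1)i$ term and yields $a^{(j-j')(s_{i,j}(1)-1)}=1$ outright; Lemma~\ref{KN2} then converts this into the desired congruence with no $p$-adic bookkeeping. I would recommend replacing your ``only if'' argument by that route.
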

\begin{proof}

The statement holds if $i=0$, and hence below we assume that 
$i>0$.

{\bf($\Rightarrow$)}
Let $s_{i,j}=s_{i,j'}$, and $\pi$ be the power function of $s_{i,j}$.
By \eqref{cr}, $s_{i,j} t=t^{s_{i,j}(1)}s_{i,j}^{\pi(1)}$ holds in
$\langle t,s_{i,j}\rangle$. This implies in $\langle t,s_{i,j}\rangle^{b_j}=\langle t a^j,a^i \rangle$,
$$ 
t^{(p+1)^i} a^{i+j}=a^i (t a^j)=(t a^j)^{s_{i,j}(1)}
(a^i)^{\pi(1)}=t^{b_j(s_{i,j}(1))}a^{s_{i,j}(1)j+\pi(1)i}.
$$
As $b_j (s_{i,j}(1))=(b_js_{i,j})(1)=(a^ib_j)(1)=(p+1)^i,$
we get 
\begin{equation}\label{aij}
a^{i+j}=a^{s_{i,j}(1)j+\pi(1)i}.
\end{equation}

Since $s_{i,j}=s_{i,j'}$, the same argument yields
$a^{i+j'}=a^{s_{i,j'}(1)j'+\pi(1)i}$. Thus 
$a^{(j-j')(s_{i,j}(1)-1)}=1,$ and hence
\begin{equation}\label{jj'}
j \equiv j' \pmod { \;p^{e-1}/\gcd(s_{i,j}(1)-1,p^{e-1}) \; }.
\end{equation}

By \eqref{sij}, the order of $s_{i,j}$ is  
$p^{e-1}/\gcd(i,p^{e-1})$. This and Lemma~\ref{KN2} yield 
$\gcd(s_{i,j}(1)-1,p^e)=p\cdot\gcd(i,p^{e-1})$. 
Since $i\ne 0,$ $\gcd(i,p^{e-1})=\gcd(i,p^{e-2})$ and 
$\gcd(s_{i,j}(1)-1,p^e)=\gcd(s_{i,j}(1)-1,p^{e-1}),$ and thus
$$
\gcd(s_{i,j}(1)-1,p^{e-1})=p\cdot\gcd(i,p^{e-2}).
$$ 
This and \eqref{jj'} yield the right side of \eqref{iff2}.
\medskip
 
{\bf ($\Leftarrow$)} 
Put $p^u=p^{e-2}/\gcd(i,p^{e-2})$. Then order of $a^i$ is 
$p^{e-1}/\gcd(i,p^{e-2})=p^{u+1}$, and hence by \eqref{ord=pi},
\begin{equation}\label{p+1}
(p+1)^i= zp^{e-u-1}+1\;\text{for some}\; z\in\{1,\ldots,p^{u+1}-1\}, \; p\nmid z.
\end{equation}

For two arbitrary mappings $f,g:\Z_{p^e}\to\Z_{p^e},$ their \emph{sum} $f+g,$ \emph{difference} $f-g,$ and \emph{product} 
$fg$ are  the mappings from $\Z_{p^e}$ to $\Z_{p^e}$ defined in 
the usual way, that is, for $x\in\Z_{p^e},$ 
$$
(f+g)(x)=f(x)+g(x),\;(f-g)(x)=f(x)-g(x),\;(fg)(x)=f(g(x)).
$$

Let $j'=j+j_0p^{u}$, for some $j_0$. For every $x\in \Z_{p^e}$ with 
$x\ne 0,$ 
$ \big(\, (p+1)^{j_0p^ux}-1 \,\big) \mid \big(\, (p+1)^{j'x}-(p+1)^{jx} \,\big)$.
Therefore, $p^{u+1}\mid\big(\, (p+1)^{j'x}-(p+1)^{jx} \,\big)$. 
This and \eqref{bj} imply 
$$
(b_{j'}-b_j)(x) \equiv 0 \pmod {p^{u+1}}\;\text{for all}\;x\in\Z_{p^e}.
$$ 
This and \eqref{p+1} yield
$a^i(b_{j'}-b_j)=b_{j'}-b_j$. 
Also, by Lemma~\ref{sijx}, $s_{i,j}(x)\equiv x\pmod {p^{e-u-1}}$ for all
$x\in\Z_{p^e}$. Therefore, by Lemma \ref{bj-2},
$$ 
(b_{j'}-b_j)(s_{i,j}(x))=(b_{j'}-b_j)(x)\;\text{for all}\; x\in\Z_{p^e}.
$$

Consequently, $(b_{j'}-b_j)s_{i,j}=b_{j'}-b_j=a^i(b_{j'}-b_j)$. Thus 
$$
a^ib_{j'}-a^ib_j=b_{j'}s_{i,j}-b_js_{i,j}=
b_{j'}s_{i,j}-a^ib_j;
$$
and we get $s_{i,j}=b_{j'}^{-1}a^ib_{j'}=s_{i,j'}$. 
\end{proof}

Theorem~\ref{T1} follows from Lemma~\ref{part1} and 
from the following lemma.

\begin{lem}\label{part2}
If $s_{i,j}=s_{i',j'},$ then $i=i'$.
\end{lem}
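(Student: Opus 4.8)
The plan is to show that the parameter $i$ is an isomorphism invariant of the skew-morphism $s_{i,j}$, independent of $j$. The cleanest invariant available is the order: by the definition \eqref{sij}, $s_{i,j}=b_j^{-1}a^ib_j$ is conjugate to $a^i$ in $\Sym(\Z_{p^e})$, so $|s_{i,j}|=|a^i|=p^{e-1}/\gcd(i,p^{e-1})$. Hence if $s_{i,j}=s_{i',j'}$ then $\gcd(i,p^{e-1})=\gcd(i',p^{e-1})$; writing $i=p^k i'_0$ and $i'=p^{k'}i'_1$ with $p\nmid i'_0,i'_1$, this forces $k=k'$. So the order alone pins down the $p$-adic valuation of $i$, and it remains to recover the unit part modulo the appropriate power of $p$.

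For the finer information I would exploit the action of $s_{i,j}$ on the generator $1$ together with Lemma~\ref{KN2}. First I would reduce to the case $p\nmid i$ (equivalently $|s_{i,j}|=p^{e-1}$): if $i=p^k i_0$, then $s_{i,j}=s_{i_0,j}^{p^k}$, and since $s_{i_0,j}$ and $s_{i',j'}$ would have to share all their powers, it suffices to identify the top-order skew-morphisms. In the full-order case, the computation already carried out in the proof of Lemma~\ref{part1} is decisive: conjugating the defining crossed relation by $b_j$ into $\langle ta^j,a^i\rangle$ gave, via $b_j(s_{i,j}(1))=(p+1)^i$, the identity $s_{i,j}(1)=(b_j^{-1}((p+1)^i))$, and more usefully $\gcd(s_{i,j}(1)-1,p^e)=p\cdot\gcd(i,p^{e-1})$. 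I would read this equality the other way: the quantity $s_{i,j}(1)$ determines $\gcd(i,p^{e-1})$, but to separate two units $i,i'$ with the same gcd I need the actual residue of $s_{i,j}(1)$, not merely its distance from $1$.

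The key step, therefore, is to compute $s_{i,j}(1)\bmod p^e$ explicitly and show its leading coefficient encodes $i$. Applying $b_j$ to both sides of $b_j s_{i,j}=a^i b_j$ evaluated at $1$ gives $b_j(s_{i,j}(1))=(p+1)^i$, so $s_{i,j}(1)=b_j^{-1}\big((p+1)^i\big)$; expanding $b_j^{-1}$ from the geometric-series formula \eqref{bj} and using $(p+1)^i\equiv 1+ip\pmod{p^2}$, I expect $s_{i,j}(1)\equiv 1+ip\pmod{p^2}$ at the bottom level, with higher terms controlled by Lemma~\ref{sijx}. Comparing $s_{i,j}(1)=s_{i',j'}(1)$ then forces $i\equiv i'$ modulo a power of $p$ large enough, when combined with the matching $p$-adic valuations from the order and the matching gcd's, to conclude $i=i'$ as elements of $\{0,1,\ldots,p^{e-1}-1\}$.

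The main obstacle I anticipate is bookkeeping the residue $s_{i,j}(1)\bmod p^e$ carefully enough across all levels: Lemma~\ref{sijx} only guarantees $s_{i,j}(1)\equiv 1\pmod{p\cdot\gcd(i,p^{e-1})}$, so the congruence that recovers $i$ lives modulo $p^e$ and one must verify that the leading $p$-adic digit of $s_{i,j}(1)-1$ is genuinely a unit multiple of $i$ and is unaffected by $j$. The fact that $s_{i,j}(1)$ is independent of $j$ (which the finer analysis must show, or which follows once the gcd and the residue are both pinned) is exactly what makes $i$ a clean invariant, so I would organise the argument to establish the $j$-independence of $s_{i,j}(1)$ first and then extract $i$ from it.
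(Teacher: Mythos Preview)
Your strategy has a genuine gap: the central claim that $s_{i,j}(1)$ is independent of $j$ (and hence that $s_{i,j}(1)$ alone recovers $i$) is false. A small computation in $\Z_{27}$ (so $p=3,\ e=3$) shows this: with $a(x)=4x$ one finds $s_{1,2}(1)=b_2^{-1}(4)=13$, whereas $s_{1,0}(1)=a(1)=4$. Worse for your plan, $s_{4,0}(1)=4^4=13$ as well, so $s_{1,2}(1)=s_{4,0}(1)$ even though $1\ne 4$ and both skew-morphisms have the full order $p^{e-1}=9$; they are genuinely different permutations (e.g.\ $s_{1,2}(2)=8\neq 26=s_{4,0}(2)$). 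Thus the pair ``order plus value at $1$'' does \emph{not} determine $i$. Your congruence $s_{i,j}(1)\equiv 1+ip\pmod{p^2}$ is correct, but it only pins down $i\bmod p$; the higher $p$-adic digits of $s_{i,j}(1)-1$ genuinely depend on $j$, so the invariant you are hoping for does not exist. The reduction you sketch to the full-order case is also not valid as stated: from $s_{i_0,j}^{p^k}=s_{i_0',j'}^{p^k}$ one cannot conclude $s_{i_0,j}=s_{i_0',j'}$, since a skew-morphism of order $p^{e-1-k}$ has several $p^k$-th roots among the $s_{i,j}$.

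The paper proceeds quite differently. Rather than trying to read $i$ off a single numerical invariant, it argues by induction on the order of $s_{i,j}$ and \emph{first} pins down $j$ modulo $p^{e-2}/\gcd(i,p^{e-2})$, using the relation \eqref{aij} that links $i,j,s_{i,j}(1)$ and $\pi(1)$. The inductive hypothesis applied to $s_{i,j}^p=s_{ip,j}$ gives $ip\equiv i'p\pmod{p^{e-1}}$; combined with $p\mid(\pi(1)-1)$ this forces $j\equiv j'$ modulo the required power of $p$. Then Lemma~\ref{part1} yields $s_{i',j'}=s_{i',j}$, so $s_{i,j}=s_{i',j}$, and conjugating back by $b_j$ gives $a^i=a^{i'}$, hence $i=i'$. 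The point is that $i$ is recovered only \emph{after} the $j$-parameters have been aligned; there is no shortcut via $s_{i,j}(1)$ alone.
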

\begin{proof}
Let $s_{i,j}=s_{i',j'}$, and $\pi$ be the power function of $s_{i,j}$.
We prove the lemma by induction on the order of $s_{i,j}$. 
The statement holds obviously if $s_{i,j}$ is the identity permutation, 
that is, if $i=0$. Thus for the rest of the proof assume that $i\ne 0$.
By \eqref{aij}, 
$$ 
a^{(s_{i,j}(1)-1)j}=a^{i(1-\pi(1))}\;\text{and}\;
a^{(s_{i,j}(1)-1)j'} = a^{i'(1-\pi(1))}.
$$ 

Then $s_{ip,j}=s_{i,j}^p=s_{i',j'}^p=s_{i'p,j'}$. Thus by the 
induction hypothesis, $ip\equiv i'p\pmod{p^{e-1}}$. 
As $p\mid (\pi(1)-1),$ see Lemma~\ref{KN}(ii), we conclude
$i(1-\pi(1))=i'(1-\pi'(1))\pmod{p^{e-1}}$.
Thus the above equalities reduce to
$j\equiv j'\pmod{p^{e-1}/\gcd(p^{e-1},s_{i,j}(1)-1)}$. 
Since it has been already shown that 
$\gcd(s_{i,j}(1)-1,p^{e-1})=p\cdot\gcd(p^{e-2},i),$ it follows that 
$j\equiv j'\pmod{p^{e-2}/\gcd(p^{e-2},i)}$. 
Thus by Lemma~\ref{part1}, $s_{i',j'}=s_{i',j},$ and we get
$s_{i,j}=s_{i',j'}=s_{i',j}$. It is obvious that this implies that 
$i=i'$. 
\end{proof}

\section{The skew-morphisms $s_{i,j,k,l}$}
For the rest of the paper we set $b$ to be an automorphism of 
$\Z_{p^e}$ of order  $p-1$.
Define the permutations
\begin{equation}\label{sijkl}
s_{i,j,k,l}=b_j^{-1}a^ib^kb_lb_j, 
\end{equation}
where the integers $i,j,k,l$ satisfy the following
conditions
\begin{enumerate}
\item[(C0)] $i,l\in\{0,\ldots,p^{e-1}-1\}$, $k\in\{0,\ldots,p-2\}$, $j\in\{0,\ldots,p^{e-2-c}-1\}$, where 
$p^c=\gcd(i,p^{e-2});$  
\item[(C1)] if $i=0$ or $k=0,$ then $l=0;$
\item[(C2)] if $i\ne 0$ and $k\ne 0,$ then $p^c\mid j$ and 
$p^{\max\{c,e-2-c\}} \mid l$. 
\end{enumerate}

 A $4$-tuple $(i,j,k,l)$ of integers satisfying the conditions (C0), (C1) and (C2) will be called \emph{admissible}.

Notice that, the permutations $s_{i,j,k,l}$ include the skew-morphisms 
$s_{i,j}$. Namely, by \eqref{sij} and \eqref{sijkl}, 
$s_{i,j,0,0}=s_{i,j};$ and by 
Theorem~\ref{T1}, all skew-morphisms $s_{i,j}$ are obtained in this way.

Before we prove that all permutations $s_{i,j,k,l}$ are 
skew-morphisms we give two lemmas.

\begin{lem}\label{BC}
Let $G$ be a split metacyclic $p$-group given by the presentation 
$$
G=\big\langle x,y \mid x^{p^m}=y^{p^n}=1, x^y=x^{1+p^{m-n}}\big\rangle
$$ 
where $m > n\ge 1$. Then the automorphisms in $\Aut(G)$ are 
the mappings $\theta_{u,v,w},$ where 
$u,v \in \{0,\ldots,p^m-1\}$ with $p\nmid u,\, p^{m-n}\mid v,$ and 
$w\in \{0,\ldots,p^n-1\}$ with $p^{2n-m}\mid w$ whenever 
$2n>m,$ defined as 
$$
\theta_{u,v,w}(x^iy^j)=x^{ui+vj}y^{wi+j}\;\text{for all}\;
x^iy^j\in G.
$$ 
In particular, $|\Aut(G)|=(p-1)p^{m-1+n+\min\{n,m-n\}}$.
\end{lem}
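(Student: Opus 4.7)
My plan is to prove the lemma in two directions and then enumerate. First I will show that each map $\theta_{u,v,w}$ with admissible $(u,v,w)$ defines a well-defined automorphism; second, that every $\theta\in\Aut(G)$ arises this way; and finally I will count.

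For the existence direction, I verify that $\theta_{u,v,w}$ respects the three defining relations of $G$. The relations $\theta_{u,v,w}(x)^{p^m}=1$ and $\theta_{u,v,w}(y)^{p^n}=1$ follow from Lemma~\ref{order}, using $p\nmid u$ (which forces $|x^u|=p^m$ and hence $|x^uy^w|=p^m$) and $p^{m-n}\mid v$ (which gives $|x^v|\le p^n=|y|$), respectively. For the conjugation relation $\theta(x)^{\theta(y)}=\theta(x)^{1+p^{m-n}}$ I work inside the semidirect product decomposition $G=\langle x\rangle\rtimes\langle y\rangle$, with multiplication rule $x^{u_1}y^{w_1}\cdot x^{u_2}y^{w_2}=x^{u_1+u_2 c^{-w_1}}y^{w_1+w_2}$ where $c=1+p^{m-n}$, and the induced power formula $(x^uy^w)^k=x^{u(1+c^{-w}+\cdots+c^{-(k-1)w})}y^{kw}$. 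A direct expansion of both sides of the conjugation relation, followed by comparing $x$- and $y$-exponents modulo $p^m$ and $p^n$ respectively, confirms the relation exactly under hypotheses (C0)--(C2). Bijectivity follows by examining the induced linear map on the Frattini quotient $G/\Phi(G)\cong\Z_p\times\Z_p$: the $2\times 2$ matrix representing $\theta_{u,v,w}$ has determinant $\equiv u\pmod p$ (since $p\mid v$), which is a unit.

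For the converse, any $\theta\in\Aut(G)$ is determined by its action on the generators, say $\theta(x)=x^uy^w$ and $\theta(y)=x^vy^s$. Applying Lemma~\ref{order} to the order relations $|\theta(x)|=p^m$ and $|\theta(y)|=p^n$ forces $p\nmid u$ and $p^{m-n}\mid v$. Expanding the conjugation relation in the semidirect product as above, matching $y$-exponents yields $wp^{m-n}\equiv 0\pmod{p^n}$, which is equivalent to $p^{2n-m}\mid w$ exactly when $2n>m$. Matching $x$-exponents, and using $p^{m-n}\mid v$ to absorb the $v$-dependent terms modulo $p^m$, collapses the equation to $c^{s-1}\equiv 1\pmod{p^m}$; since $c=1+p^{m-n}$ has order $p^n$ in $(\Z/p^m)^*$ by \eqref{ord=pi}, this forces $s\equiv 1\pmod{p^n}$, hence $s=1$. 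The enumeration is then immediate: $(p-1)p^{m-1}$ choices for $u$, $p^n$ choices for $v$ (multiples of $p^{m-n}$ modulo $p^m$), and $p^{\min\{n,m-n\}}$ choices for $w$, yielding $|\Aut(G)|=(p-1)p^{m-1+n+\min\{n,m-n\}}$.

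The main obstacle is the $x$-exponent comparison in the converse direction. Both sides are expressed as combinations of geometric sums of powers of $c$ together with $v$-dependent correction terms, and showing that after imposing $p^{m-n}\mid v$ the equation collapses cleanly to $c^{s-1}\equiv 1\pmod{p^m}$ requires careful bookkeeping of $p$-adic valuations of expressions such as $\sum_{i=0}^{p^{m-n}-1}c^{-iw}$ and $v(c^{-w}-1)$. This relies on $G$ being a regular $p$-group (valid because $G'=\langle x^{p^{m-n}}\rangle$ is cyclic and $p$ is odd), which legitimizes the clean power formula used throughout the computation.
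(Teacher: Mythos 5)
Your proof is correct, but it is a genuinely different argument from the paper's: the paper disposes of this lemma in one line by citing Bidwell and Curran's Theorem~3.1 on automorphism groups of split metacyclic $p$-groups, whereas you give a self-contained verification. Your route is the natural direct one: von Dyck's theorem reduces well-definedness to checking the three defining relations on generator images, Lemma~\ref{order} handles the two order relations (forcing $p\nmid u$ and $p^{m-n}\mid v$), the exact normal-form identities $x^{u_1}y^{w_1}\cdot x^{u_2}y^{w_2}=x^{u_1+u_2c^{-w_1}}y^{w_1+w_2}$ and $(x^uy^w)^k=x^{u(1+c^{-w}+\cdots+c^{-(k-1)w})}y^{kw}$ handle the conjugation relation, and the Frattini quotient gives bijectivity. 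I checked the valuation bookkeeping you flag as the main obstacle: with $p^{m-n}\mid v$ and $p^{2n-m}\mid w$ when $2n>m$, one gets $v_p(v(c^{-w}-1))\ge 2(m-n)+\min\{v_p(w),n\}\ge m$ and $\sum_{i=0}^{c-1}c^{-iw}\equiv c\pmod{p^m}$, so the $x$-exponent equation does collapse to $c^{s-1}\equiv 1\pmod{p^m}$ and hence $s=1$; the count then matches. What your approach buys is independence from the external reference (and it makes visible exactly where each congruence condition on $u,v,w$ comes from); what the citation buys is brevity and the companion fact, used later in Theorem~\ref{T4}, that the automorphism group of a \emph{non-split} metacyclic $p$-group is a $p$-group. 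Two minor remarks: the displayed formula $\theta_{u,v,w}(x^iy^j)=x^{ui+vj}y^{wi+j}$ must be read, as you implicitly do, as prescribing the images of the generators $x\mapsto x^uy^w$, $y\mapsto x^vy$ (taken literally on all normal forms it fails to be multiplicative once $w\ne 0$); and the appeal to regularity of $G$ is unnecessary, since your power formula is an exact identity in the semidirect product rather than a regularity estimate.
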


\begin{proof}
This is a corollary of the more general result
\cite[Theorem~3.1]{BC}.
\end{proof}

\begin{lem}\label{theta}
With the notation of Lemma~\ref{BC}, let $u\in\{0,\ldots,p^m-1\},$ 
$p\nmid u$ such that $u\ne 1,$ and as a unit of the ring $\Z_{p^m},$ $u$ has order $d$ with $p\nmid d$. 
Then the automorphism $\theta_{u,0,w}$ has order $d$.
\end{lem}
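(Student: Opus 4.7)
The plan is to compute $\theta_{u,0,w}^k$ explicitly by induction on $k$, using the formula supplied by Lemma~\ref{BC}. Setting $v=0$ gives $\theta_{u,0,w}(x^iy^j)=x^{ui}y^{wi+j}$, and a straightforward induction should yield
$$
\theta_{u,0,w}^k(x^iy^j)=x^{u^ki}\,y^{w(1+u+u^2+\cdots+u^{k-1})i+j}
$$
for all $k\ge 1$. Consequently, $\theta_{u,0,w}^k$ is the identity of $G$ if and only if
$$
u^k\equiv 1\pmod{p^m}\quad\text{and}\quad w(1+u+\cdots+u^{k-1})\equiv 0\pmod{p^n}.
$$

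Next, I would observe that the hypotheses $u\ne 1$ and $p\nmid d$ force $u\not\equiv 1\pmod p$. Indeed, the decomposition $(\Z_{p^m})^*\cong\Z_{p-1}\times\Z_{p^{m-1}}$ shows that an element of order coprime to $p$ sits in the $\Z_{p-1}$ factor, and such an element is congruent to $1$ modulo $p$ only when it is trivial. Therefore $\gcd(u-1,p^m)=1$, i.e.\ $u-1$ is a unit of $\Z_{p^m}$.

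With $u-1$ invertible mod $p^m$, setting $k=d$ gives $(u-1)(1+u+\cdots+u^{d-1})=u^d-1\equiv 0\pmod{p^m}$, hence $1+u+\cdots+u^{d-1}\equiv 0\pmod{p^m}$, and in particular modulo $p^n$. Both congruences are thus satisfied, so $\theta_{u,0,w}^d=\mathrm{id}$. Conversely, for any $0<k<d$ the first congruence already fails by definition of $d$, so $\theta_{u,0,w}^k\ne\mathrm{id}$. Combining these shows that $\theta_{u,0,w}$ has order exactly $d$.

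The only mildly delicate step is verifying $u\not\equiv 1\pmod p$ from the hypothesis $p\nmid d$ together with $u\ne 1$; once this is in place the geometric-series identity $(u-1)(1+u+\cdots+u^{d-1})=u^d-1$ handles the contribution of $w$ automatically, regardless of the value of $w$. Everything else is a routine induction and a comparison of orders.
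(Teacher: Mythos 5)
Your proof is correct and follows essentially the same route as the paper's: the explicit formula for $\theta_{u,0,w}^k$, the observation that $u\ne 1$ together with $p\nmid d$ forces $p\nmid(u-1)$, and the geometric-series identity $(u-1)(1+u+\cdots+u^{d-1})=u^d-1$ combined with $n<m$ to show the $y$-exponent vanishes at $k=d$. You simply spell out in more detail (via the structure of $(\Z_{p^m})^*$) the step the paper states in one line.
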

\begin{proof}
A straightforward computation gives 
$$
(\theta_{u,0,w})^k(x^iy^j)=x^{u^ki}y^{wi(1+u+\cdots+u^{k-1})+j}.$$
Therefore, the order $|\theta_{u,0,w}| \ge d$. 
Also, $p^m\mid (u^d-1)$. On the other hand, since the order of 
the unit $u\ne 1$ is not divisible by $p,$  it follows that $p\nmid (u-1)$ as well. 
We conclude from these that $p^m\mid (1+u+\cdots+u^{d-1})$.  Using also that  $n<m,$ we find 
$(\theta_{u,0,w})^d(x^iy^j)=x^iy^j,$ and thus 
$|\theta_{u,0,w}|=d$.
\end{proof}

\begin{thm}\label{T2}
Every permutation $s_{i,j,k,l}$ defined in \eqref{sijkl} is a 
skew-morphism of $\Z_{p^e}$. 
\end{thm}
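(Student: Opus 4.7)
The plan is to verify the skew-morphism criterion \eqref{criterion}. The condition $s_{i,j,k,l}(0)=0$ is immediate, since each of $a^i$, $b^k$, $b_l$, $b_j^{\pm 1}$ fixes $0$. The content is therefore the order equality $|\langle t, s_{i,j,k,l}\rangle|=p^e\cdot|s_{i,j,k,l}|$. Conjugating by $b_j$ (which preserves both sides) reduces this to
\[
|\langle ta^j,\, u\rangle|=p^e\cdot|u|, \qquad u:=a^i b^k b_l.
\]

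The key preliminary computation, valid in $\Sym(\Z_{p^e})$ and using that $\langle a,b\rangle$ is abelian, is
\[
u t u^{-1}=a^i b^k (b_l t b_l^{-1}) b^{-k} a^{-i}=a^i b^k (ta^l) b^{-k}a^{-i}=t^{q^k(p+1)^i}\,a^l, \qquad q:=b(1),
\]
obtained from $b_l t b_l^{-1}=ta^l$, $b^k t b^{-k}=t^{q^k}$, and $a^i t a^{-i}=t^{(p+1)^i}$. In particular, $u t u^{-1}\in\langle t,a\rangle$, the metacyclic $p$-group to which Lemmas~\ref{order} and \ref{BC} apply.

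I would then split on the cases enforced by (C1) and (C2). When $l=0$, the element $u=a^i b^k$ lies in $\Aut(\Z_{p^e})=\langle a\rangle\times\langle b\rangle$ and therefore normalizes $\langle t\rangle$, so $\langle ta^j,u\rangle$ is contained in the holomorph $\langle t,a,b\rangle$. Its order is then read off by separating the cyclic $(p-1)$-part $\langle b^k\rangle$ from the Sylow $p$-part, the latter a split metacyclic $p$-group to which Lemma~\ref{order} applies. The substantial case is $l\ne 0$, in which (C1) gives $i,k\ne 0$ and (C2) gives $p^c\mid j$ and $p^{\max\{c,e-2-c\}}\mid l$ with $p^c=\gcd(i,p^{e-2})$. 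Here $b_l\notin\langle t,a,b\rangle$ and does not normalize $\langle t,a\rangle$, so $\langle ta^j,u\rangle$ is no longer a subgroup of the holomorph and the previous argument breaks down. The role of the two divisibilities on $l$ and $j$ is precisely to force $b_l$ to commute with $a^j$: using the explicit formula \eqref{bj}, each displacement $b_l(x+1)-b_l(x)=(p+1)^{lx}$ is $\equiv 1$ modulo a large power of $p$, so that $b_l(x)-x$ lands in a subgroup of $\Z_{p^e}$ fixed pointwise by $a^j$ (in the spirit of Lemma~\ref{sijx} and Lemma~\ref{bj-2}). Granting the commutation $b_l a^j b_l^{-1}=a^j$, the conjugate $u(ta^j)u^{-1}=t^{q^k(p+1)^i}a^{l+j}$ lies in $\langle t,a\rangle$, so $\langle ta^j,u\rangle$ is realised as an extension of a split metacyclic $p$-group (generated by $ta^j$ and the $p$-part of $u$) by the cyclic factor $\langle b^k\rangle$. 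Lemma~\ref{BC} then organises the automorphism action and Lemma~\ref{order} gives the required element orders, yielding $|\langle ta^j,u\rangle|=p^e\cdot|u|$ as wanted.

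The main obstacle I anticipate is the $l\ne 0$ case, specifically extracting the commutation $b_l a^j b_l^{-1}=a^j$ from the admissibility hypotheses. This is a precise $p$-adic calculation on the geometric sums defining $b_l$, and it appears to genuinely require both constraints of (C2) simultaneously — the divisibility on $l$ governing the ``granularity'' of the cycle structure of $b_l$, the divisibility on $j$ matching it to the orbit structure of $a^j$. Once this commutation is established, everything else is bookkeeping with Lemmas~\ref{order} and \ref{BC}.
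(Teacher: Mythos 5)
Your overall architecture coincides with the paper's: reduce via \eqref{criterion}, conjugate by $b_j$, and analyse $G=\langle ta^j,u\rangle$ with $u=a^ib^kb_l$; the computation $utu^{-1}=t^{q^k(p+1)^i}a^l$ is correct and is exactly the relation the paper exploits. But two steps are misidentified or missing. First, the commutation you single out, $b_la^jb_l^{-1}=a^j$, is not the one that carries the proof. What is needed is that $b_l$ commutes with $a^{i}$ (more precisely with a generator $a^{i'}$ of $\langle a^i\rangle$ having $\gcd(i',p^{e-2})=p^c$, where $p^c=\gcd(i,p^{e-2})$): this is what makes $P=\langle ta^j,a^i\rangle=\langle t,a^i\rangle$ a normal split metacyclic $p$-subgroup of $G$ (the relation $u(ta^j)u^{-1}\in\langle t,a\rangle$ alone does not produce a normal subgroup of $G$, since $\langle t,a\rangle\not\leq G$), and it is also what lets you speak of ``the $p$-part of $u$'' at all --- $u=a^i\cdot(b^kb_l)$ is a product of commuting factors of coprime orders only once $[b_l,a^i]=1$ is known. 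Moreover no fresh $p$-adic computation is required for this: since $p^{e-2}/\gcd(i',p^{e-2})=p^{e-2-c}$ divides $l$ by (C2), Theorem~\ref{T1} gives $s_{i',l}=s_{i',0}=a^{i'}$, i.e.\ $[b_l,a^{i'}]=1$. Your commutation with $a^j$ is a weaker consequence of this together with $p^c\mid j$, and the divisibility $p^c\mid j$ itself is used for something else, namely to ensure $a^j\in\langle a^i\rangle$ so that $P=\langle t,a^i\rangle$.

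Second, and more seriously, the final order count is not supplied by Lemma~\ref{order}. To obtain $|G|=p^e\cdot|u|$ one must know that $b^kb_l$ has order exactly $d=|b^k|$, prime to $p$; a priori $b_l$ could inflate the order of $b^kb_l$ by a power of $p$, in which case $|u|\ne p^{e-1-c}d$ and the count $|G_0|=|u|$ fails. Lemma~\ref{order} only controls orders of elements inside the metacyclic $p$-group $P$, whereas $b^kb_l$ lies outside it. The paper closes this by observing that $b^kb_l$ fixes $0$ while $P$ is transitive, so $b^kb_l$ acts faithfully on $P$ by conjugation as the automorphism $\theta_{q^k,0,w}$ of Lemma~\ref{BC} (here $p^{\max\{c,e-2-c\}}\mid l$ is used again, to write $a^l=(a^{i'})^w$ with $w$ satisfying the divisibility constraint of Lemma~\ref{BC}); then Lemma~\ref{theta} gives $|\theta_{q^k,0,w}|=d$, hence $|b^kb_l|=d$, $|u|=p^{e-1-c}d=|G_0|$, and $G=\langle ta^j\rangle\langle u\rangle$. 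Without an argument of this kind your plan does not close.
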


\begin{proof}
Let $s_{i,j,k,l}$ be a permutation defined in \eqref{sijkl}. 
If $k=0,$ then $s_{i,j,k,l}=s_{i,j}$ and the statement holds.
For the rest of the proof it is assumed that $k\ne 0$. 

Let $i=0$. Then $j=0$ by (C0) and $l=0$ by (C1). We get 
$s_{i,j,k,l}=b^k,$ which is an automorphism in $\Skew(\Z_{p^e})$.

Now, suppose that $i\ne 0$. 
Since $k\ne 0,$ $t^{b^k}=t^u$ for some 
$u\in\{2,\ldots,p^e-1\},$ $p\nmid u,$ and as unit of $\Z_{p^e},$ 
$u$ has order $d$ with $p\nmid d$. 
Let $p^c=\gcd(i,p^{e-2})$. 
Let $G=\langle ta^j, a^i,b^kb_l\rangle$ and 
$P=\langle ta^j,a^i \rangle$. It follows from (C2) that 
$P=\langle t,a^i\rangle$ and there exists $a_1\in\langle a^i\rangle$
such that $P$ admits the presentation
$$
P=\big\langle t,a_1 \mid t^{p^e}=(a_1)^{p^{e-1-c}}=1,  
t^{a_1}=t^{1+p^{c+1}}\big\rangle.
$$

Clearly, $a_1=a^{i'}$ for a some $i'$ satisfying 
$\gcd(i',p^{e-2})=p^c$. This and (C2) yield $p^{e-2}/\gcd(i',p^{e-2})\mid l,$  and so $s_{i',l}=s_{i',0}=a^{i'}$ follows from Theorem~\ref{T1}. In other words, $b_l$ commutes with $a_1=a^{i'}$. 
Now, we can write 
\begin{equation}\label{conj}
t^{b^kb_l}=t^ua^l\;\text{and}\; (a_1)^{b^kb_l}=a_1.
\end{equation}

The group $\langle a_1\rangle=\langle a^{p^c}\rangle$. 
On the other hand, by (C2), $p^c \mid l,$ and so 
$a^l=a_1^w$ for some $w\in \{0,\ldots,p^{e-1-c}-1\}$.
This and \eqref{conj} yield that $P$ is normal in $G$. 
We claim that $b^kb_l$ acts on $P$ by conjugation as the 
automorphism  $\theta_{u,0,w}$ described in Lemma~\ref{BC}. 
In fact, we have to show that $p^{e-2-2c}\mid w$ whenever 
$2(e-1-c)>e$. In order to see that this indeed holds,  observe that 
$a^l=a_1^w=a^{i'w},$ $\gcd(i',p^{e-2})=p^c,$ 
and finally $p^{e-2-c}\mid l,$ by (C2). 
 
Since $b^kb_l$ fixes $0$ and $P$ is transitive on $\Z_{p^e},$  
$Z_{\langle b^kb_l\rangle}(P)=1$. Equivalently, $b^kb_l$ acts
faithfully on $P,$ in particular, $|b^kb_l|=|\theta_{u,0,l}|$.
Now, by Lemma~\ref{theta}, $|b^kb_l|=d$. Thus 
$|G|=|P|\cdot d$. This implies that the stabilizer $G_0$ of $0$ in $G$ 
has order $|G_0|=p^{e-1-c}d$.
Also, as $a^i$ commutes with $b^kb_l,$ we find 
$|a^ib^kb_l|=p^{e-1-c}d=|G_0|,$ implying 
that $G_0=\langle a^ib^kb_l\rangle,$ and $G$ factorises as 
$G=\langle ta^j \rangle \langle a^ib^kb_l\rangle$. 
Therefore, the conjugate group $G^{(b_j)^{-1}}$ factorises as 
$G^{(b_j)^{-1}}=\langle t \rangle \langle s_{i,j,k,l}\rangle,$ 
and hence $s_{i,j,k,l}$ is a skew-morphism by \eqref{criterion}.
\end{proof}

\begin{prop}\label{unique}
Every permutation $s_{i,j,k,l}$ in \eqref{sijkl} is uniquely determined 
by the admissible 4-tuple $(i,j,k,l)$. In particular, there is a bijection between the set of admissible $4$-tuples of integers and the set of skew-morphisms
$s_{i,j,k,l}$.
\end{prop}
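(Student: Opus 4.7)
The plan is to recover the admissible 4-tuple $(i,j,k,l)$ from the permutation $s_{i,j,k,l}$ in two stages: first isolate $i$ and $j$ by passing to the $(p-1)$-st power, reducing to the family $s_{m,n}$ where Theorem~\ref{T1} already gives uniqueness, and then recover $k$ and $l$ by evaluating the resulting equation at carefully chosen inputs.

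Assume $s_{i,j,k,l}=s_{i',j',k',l'}$. For the main case $i\ne 0$ and $i'\ne 0$, I would use the commutation $a^ib^kb_l=b^kb_la^i$ together with the fact that $b^kb_l$ has order dividing $p-1$, both established within the proof of Theorem~\ref{T2}. These yield
\[
s_{i,j,k,l}^{\,p-1}=b_j^{-1}(a^ib^kb_l)^{p-1}b_j=b_j^{-1}a^{i(p-1)}b_j=s_{\,i(p-1)\bmod p^{e-1},\,j},
\]
and similarly for the primed tuple, so the comparison reduces to an equality between two skew-morphisms of the $s_{m,n}$ form. Since $\gcd(p-1,p)=1$, the first parameter $i(p-1)\bmod p^{e-1}$ is nonzero whenever $i\ne 0$, and $\gcd(i(p-1),p^{e-2})=\gcd(i,p^{e-2})=:p^c$. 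Theorem~\ref{T1} then yields $i=i'$ and $j\equiv j'\pmod{p^{e-2-c}}$, and the range restriction in (C0) sharpens this to $j=j'$.

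The degenerate case $i=0$ must be handled at the outset: by (C0) and (C1) we get $j=l=0$, so $s=b^k$ has order dividing $p-1$, hence coprime to $p$. If $i'\ne 0$, the order formula $p^{e-1-c'}d'$ from the proof of Theorem~\ref{T2} would make $|s|$ divisible by $p$, a contradiction; therefore $i'=0$, then $j'=l'=0$, and $b^k=b^{k'}$ with $b$ of order $p-1$ yields $k=k'$.

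With $i=i'$ and $j=j'$ in the main case, conjugating by $b_j$ reduces the identity $s_{i,j,k,l}=s_{i,j,k',l'}$ to $b^kb_l=b^{k'}b_{l'}$ in $\Sym(\Z_{p^e})$. Evaluating at $1$ and using $b_l(1)=b_{l'}(1)=1$ from \eqref{bj}, I get $b^k(1)=b^{k'}(1)$; writing $b(x)=qx$ with $q\in\Z_{p^e}^*$ of order $p-1$ and invoking $k,k'\in\{0,\ldots,p-2\}$ from (C0), this forces $k=k'$. Then $b_l=b_{l'}$, and evaluating at $2$ gives $(p+1)^l=(p+1)^{l'}$; since $p+1$ has order $p^{e-1}$ in $\Z_{p^e}^*$ and $l,l'<p^{e-1}$, I conclude $l=l'$. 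The bijection statement then follows at once, since surjectivity onto $\{s_{i,j,k,l}\}$ is built into the definition. The main obstacle I foresee is the bookkeeping for the boundary case $i=0$ and verifying that the hypothesis of Theorem~\ref{T1} (specifically, its first parameter being nonzero) holds after the reduction.
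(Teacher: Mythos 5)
Your proposal is correct and follows essentially the same route as the paper: both arguments first separate the $p$-part from the $p'$-part of $a^ib^kb_l$ using the commutation and order facts from the proof of Theorem~\ref{T2} (the paper via uniqueness of the commuting coprime-order decomposition, you via raising to the $(p-1)$-st power, which is a harmless variant), then invoke Theorem~\ref{T1} together with the range condition (C0) to get $i=i'$ and $j=j'$, and finally recover $k$ and $l$ by evaluating $b^kb_l=b^{k'}b_{l'}$ at $1$ and $2$ exactly as in the paper. No gaps.
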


\begin{proof}
Suppose that $s_{i,j,k,l}$ and $s_{i',j',k',l'}$ are two skew-morphisms 
defined in \eqref{sijkl} for which $s_{i,j,k,l}=s_{i',j',k',l'}$. 
We are going to prove that $(i,j,k,l)=(i',j',k',l')$. It follows that these skew-morphisms have order 
$$
\frac{p^{e-1}(p-1)}{\gcd(i,p^{e-1})\gcd(k,p-1)}=
\frac{p^{e-1}(p-1)}{\gcd(i',p^{e-1})\gcd(k',p-1)}.
$$
In particular, $\gcd(i,p^{e-1})=\gcd(i',p^{e-1})$. 

Let $i=0$ or $i'=0$. Then we get $i=i'=0,$ hence $j=j'=0,$
and $l=l'=0,$ by (C1). Thus $b^k=s_{0,0,k,0}=s_{0,0,k',0}=b^{k'},$ implying that $k=k',$ and so $(i,j,k,l)=(i',j',k',l')$.

Now, suppose that none of $i$ and $i'$ is equal to $0$. 
Let $\gcd(i,p^{e-2})=p^c$ (so $\gcd(i',p^{e-2})=p^c$ as well). 
By \eqref{sij} and \eqref{sijkl},
$s_{i,j}b_j^{-1}b^kb_lb_j=s_{i',j'}b_{j'}^{-1}b^{k'}b_{l'}b_{j'}$. 
Both $s_{i,j}$ and $s_{i',j'}$ have order $p^{e-1-c} \ne 1,$ 
and both  $b_j^{-1}b^kb_lb_j$ and $b_{j'}^{-1}b^{k'}b_{l'}b_{j'}$ 
have the same order not divisible by $p$. We have proved above 
that $[s_{i,j},b_j^{-1}b^kb_lb_j]=[s_{i',j'},b_{j'}^{-1}b^{k'}b_{l'}b_{j'}]=1$ also hold, and we deduce from these that 
$s_{i,j}=s_{i',j'}$ and $b_j^{-1}b^kb_lb_j=b_{j'}^{-1}b^{k'}b_{l'}b_{j'}$. Since both $j$ and $j'$ are in $\{0,\ldots,p^{e-2-c}-1\},$ it follows by Theorem~\ref{T1} that $i=i'$ and $j=j'$. This implies that 
$b^kb_l=b^{k'}b_{l'}$. From this by \eqref{bj}, $b^k(1)=(b^kb_l)(1)=
(b^{k'}b_{l'})(1)=b^{k'}(1)$. This in turn implies that $k=k'$ and 
$b_l=b_{l'}$. Then, by \eqref{bj} again, 
$(p+1)^l=b_l(2)-1=b_{l'}(2)-1=(p+1)^{l'},$ from which $l=l'$. 
This completes the proof of the proposition.
\end{proof}

\section{Skew-morphisms of $\Z_{p^e}$}
In this section we prove that skew-morphisms $s_{i,j,k,l}$ 
comprise all skew-morphisms of $\Z_{p^e}$. Our argument is 
divided in two cases 
depending on whether the order of the skew-morphism is a 
$p$-power or not.
 
\subsection{Skew-morphisms of $p$-power order}
Let $G$ be a skew-product group of $\Z_{p^e}$ induced by 
a skew-morphism $s$ of some $p$-power order. 
Then $G$ factorises as $\langle t\rangle\langle s\rangle,$ hence 
by a result of Huppert \cite{H53} (cf. also \cite[III.11.5]{H67}) $G$ is metacyclic.  

\begin{lem}\label{L1}
Let $G=\Z_{p^e}\langle s\rangle$ be a skew product group of 
$\Z_{p^e}$ of order $p^{e+i}$, where $1\le i\le e-1$. If $G$ is  a split metacyclic group, then its commutator subgroup $G'$ has order $p^i,$ and the exponent $\exp(G/G')=p^{\max\{i,e-i\}}$. 
\end{lem}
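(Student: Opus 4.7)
The plan is to analyse $G$ as a split metacyclic $p$-group and pin down its parameters via regularity and the faithful action of $G$ on $\Z_{p^e}$.

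First, $G$ is non-abelian: otherwise $st = ts$, so $s(x+1) = s(x)+1$ for all $x$, and $s(0)=0$ forces $s = \mathrm{id}$, contradicting $|s| = p^i \ge p$. As a non-abelian split metacyclic $p$-group of order $p^{e+i}$, $G$ admits a presentation
\[
G = \langle x,y \mid x^{p^m} = y^{p^n} = 1,\ x^y = x^{1+p^r}\rangle
\]
with $\langle x\rangle$ normal cyclic, $m + n = e+i$, and $\max\{1, m-n\} \le r \le m-1$. A direct computation gives $G' = \langle x^{p^r}\rangle$, cyclic of order $p^{m-r}$, and $G/G' \cong \Z_{p^r} \times \Z_{p^n}$ of exponent $p^{\max\{r, n\}}$. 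Since $G'$ is cyclic, $G$ is a regular $p$-group by \cite[III.10.2]{H67}, so on one hand $\exp(G) = \max\{|t|, |s|\} = p^e$ (using $i \le e-1$), and on the other hand the presentation gives $\exp(G) = \max\{p^m, p^n\}$.

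The central step is to show that the action of $\langle y\rangle$ on $\langle x\rangle$ is faithful, i.e., $r = m-n$ and $m > n$. For this I would use that $\langle s\rangle$ is core-free in $G$, which follows from the faithful permutation action of $G$ on $\Z_{p^e}$ with $\langle s\rangle$ as the stabiliser of $0$. A direct computation gives $Z(G) = \langle x^{p^{m-r}}, y^{p^{m-r}}\rangle$, which is cyclic precisely when $r = m-n$ and $m > n$; otherwise $Z(G)$ contains a copy of $\Z_p \times \Z_p$. In a regular $p$-group the subgroup $\Omega_1(G)$ of elements of order dividing $p$ is elementary abelian of order $|G : G^p| = p^2$, so a non-cyclic $Z(G)$ forces $\Omega_1(G) \subseteq Z(G)$; but then the unique subgroup of order $p$ of $\langle s\rangle$ lies in $Z(G)$, giving a non-trivial normal subgroup of $G$ inside $\langle s\rangle$ and contradicting its core-freeness.

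Therefore $r = m-n$ and $m > n$, and Lemma~\ref{order} yields $\exp(G) = p^m$. Combined with $\exp(G) = p^e$ this gives $m = e$ and $n = i$, and reading off $|G'| = p^{m-r} = p^n = p^i$ together with $\exp(G/G') = p^{\max\{m-n, n\}} = p^{\max\{e-i, i\}}$ finishes the proof. The main obstacle is the faithful-action step: translating the core-freeness of $\langle s\rangle$ into a cyclicity constraint on $Z(G)$, by way of the $\Omega_1$-subgroup of the regular $p$-group $G$.
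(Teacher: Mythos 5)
Your proof is correct and follows essentially the same route as the paper's: write down the split metacyclic presentation, use the triviality of $Z(G)\cap\langle s\rangle$ (via the faithful transitive action) together with the structure of the elements of order $p$ to force $Z(G)$ to be cyclic, deduce the presentation parameters, and identify $\exp(G)=p^e$ to read off $|G'|$ and $\exp(G/G')$. The only (minor) divergence is that you justify $\exp(G)=p^e$ via regularity of $G$, whereas the paper gets it from Lemma~\ref{order} and the permutation action; your write-up also spells out the $\Omega_1$-argument that the paper leaves implicit.
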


\begin{proof} Since $G$ is a split metacyclic and non-abelian group ,
 it has a presentation in the form
$$
G=\langle x,y \mid x^{p^k}=1, y^{p^l}=1, x^y=x^{1+p^{k-m}}\rangle,
$$
where $1\le m\le\min\{l,k-1\}$.
The order $|G|=k+l,$ and hence $k+l=e+i$. The elements in $G$ 
of order $p$ generate the subgroup $\langle x^{p^{k-1}},y^{p^{l-1}}\rangle\cong\Z_p^2$. On the other hand, $G=\langle t\rangle \langle s\rangle$ where $s\in\Skew(\Z_{p^e}),$ and it can be easily seen that $Z(G)\cap \langle s\rangle=1.$ These imply that $Z(G)$ is cyclic. 
Therefore, $Z(G)\cap \langle y\rangle=1,$ from which $l=m<k$. 
We conclude that $\exp(G)=p^k,$ and thus $k=e,$ $i=l=m,$ 
$G'=\langle x^{p^{e-i}}\rangle,$  and finally, 
$\exp(G/G')=p^{\max\{i,e-i\}}$. 
\end{proof}

Let us consider the skew product groups of $\Z_{p^e}$ induced by 
the skew-morphisms $s_{i,j}$. Theorem~\ref{T1} implies that 
these groups can be listed as the groups $G(i,j),$ where 
$G(0,0):=\langle t\rangle;$ and for $e\ge 2,$ 
\begin{equation}\label{Gij}
G(i,j):=\langle t,s_{p^{e-1-i},j}\rangle,\; i\in\{1,\ldots,e-1\},\;j\in\{0,
\ldots,p^{i-1}-1\}.
\end{equation}
Notice that, $|G(i,j)|=p^{e+i}$. 

\begin{lem}\label{L2} 
Let $e\ge 2$ and $G(i,j)$ be a group defined in \eqref{Gij}.
Then $G(i,j)$ is a split metacyclic group if and only if 
$p^{e-1-i}\mid j$.
\end{lem}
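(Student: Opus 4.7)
The plan is to conjugate by $b_j$ and work inside the ambient split metacyclic group: set $G:=G(i,j)^{b_j}=\langle ta^j, a^{p^{e-1-i}}\rangle$, a subgroup of $\langle t,a\rangle$ (where $ata^{-1}=t^{1+p}$). Since conjugation preserves being split metacyclic, it suffices to decide when $G$ is.

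For the direction ``$\Leftarrow$'', if $p^{e-1-i}\mid j$, then $a^j\in\langle a^{p^{e-1-i}}\rangle$, so $t=(ta^j)a^{-j}\in G$, and hence $G=\langle t\rangle\rtimes\langle a^{p^{e-1-i}}\rangle$, which is a standard split metacyclic group of order $p^{e+i}$.

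For ``$\Rightarrow$'', my plan is to compute $G'$ and $\exp(G/G')$ and compare with Lemma~\ref{L1}. A direct calculation gives the commutator $[a^{p^{e-1-i}}, ta^j]=t^{(1+p)^{p^{e-1-i}}-1}$, and lifting-the-exponent yields $v_p((1+p)^{p^{e-1-i}}-1)=e-i$. This identifies $G'=\langle t^{p^{e-i}}\rangle$, of order $p^i$, and shows that $G/G'$ is abelian (the two generators commute modulo $G'$). Assuming $j\ne 0$, I would write $j=p^d j_0$ with $\gcd(j_0,p)=1$; by \eqref{Gij}, $j<p^{i-1}$ forces $d\le i-2$. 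The central computation is the order of $\overline{ta^j}$ in $G/G'$. Using $(ta^j)^m=t^{b_j(m)}a^{jm}$, the condition $(ta^j)^m\in G'$ requires $a^{jm}=1$, i.e., $p^{e-1-d}\mid m$, together with $p^{e-i}\mid b_j(m)$. The crux is a double application of lifting-the-exponent via the identity $b_j(p^{e-1-d})\bigl((1+p)^j-1\bigr)=(1+p)^{jp^{e-1-d}}-1$, combined with $v_p((1+p)^{jp^{e-1-d}}-1)=e$ and $v_p((1+p)^j-1)=d+1$, which yields
\[
v_p\bigl(b_j(p^{e-1-d})\bigr)=e-d-1\ge e-i.
\]
Consequently $|\overline{ta^j}|=p^{e-1-d}$, and a simpler computation gives $|\overline{a^{p^{e-1-i}}}|=p^i$; therefore $\exp(G/G')=p^{\max\{e-1-d,\,i\}}$.

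If $G$ were split metacyclic, Lemma~\ref{L1} would require $\exp(G/G')=p^{\max\{i,\,e-i\}}$. Since $d\le i-2$ implies $e-1-d\ge e+1-i>e-i$, the equality $\max\{e-1-d,i\}=\max\{i,e-i\}$ forces $e-1-d\le i$, i.e., $d\ge e-1-i$, which is precisely $p^{e-1-i}\mid j$. The main obstacle I anticipate is the $p$-adic valuation identity $v_p(b_j(p^{e-1-d}))=e-d-1$, which rests on interpreting $b_j$ as a truncated geometric series modulo $p^e$ and carefully tracking the valuations in the numerator and denominator; the remaining manipulations are routine.
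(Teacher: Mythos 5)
Your proposal is correct and follows essentially the same route as the paper: conjugate by $b_j$ to get $\langle ta^j,a^{p^{e-1-i}}\rangle$, handle the easy direction by observing $t\in G$, and for the converse compute $G'=\langle t^{p^{e-i}}\rangle$ and $\exp(G/G')$ and compare with Lemma~\ref{L1}. The only cosmetic difference is that you obtain the order of $\overline{ta^j}$ in $G/G'$ from the valuation $v_p(b_j(p^{e-1-d}))=e-1-d$ via the geometric-series identity, whereas the paper reads off the equivalent fact $|\langle t\rangle\cap\langle ta^j\rangle|=p^{m+1}$ directly.
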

\begin{proof}
Then  $G(i,j)^{b_j}=\langle ta^j,a^{p^{e-1-i}}
\rangle,$ see the proof of Proposition~\ref{sij=sm}. 
Let $G=\langle ta^j,a^{p^{e-1-i}}\rangle$.
This shows that, if $p^{e-1-i}\mid j,$ then $G$ is a split 
metacyclic group. It remains to prove that, if $G$ is a split metacyclic group, then $p^{e-1-i}\mid j$. In fact, we prove that 
$p^{e-1-i}\nmid j$ implies that $G$ is a non-split 
metacyclic group. 

Let $p^m=\gcd(j,p^{e-1}),$ and assume that $e-1-i > m$. By \eqref{Gij}, 
\begin{equation}\label{m}
0\le m\le i-2.
\end{equation}

The commutator $[x,y]$ of two elements $x,y \in G$ is given by 
$[x,y]:=xyx^{-1}y^{-1}$. 
Then $[a^{p^{e-1-i}},ta^j]=t^{(p+1)^{p^{e-1-i}}-1}$.
Let $N=\langle t^{(p+1)^{p^{e-1-i}}-1}\rangle$. Clearly, $N\le G'$. 
Notice that, $N$ is normal in $G,$ and $G/N$ 
is abelian. This implies that $N\ge G',$ and therefore, 
$G'=N$. By \eqref{gcd}, 
$|G'|=p^i$. On the other hand, 
$|\langle t\rangle \cap \langle ta^j\rangle|=p^{m+1}$. Thus $|G'\cap\langle ta^j\rangle|=p^{\min\{i,m+1\}},$ from 
which $|G'\cap\langle ta^j\rangle|=p^{m+1}$ by \eqref{m}. Also, 
$G'\le\langle t\rangle,$ hence $G'\cap\langle a\rangle=1,$ and so 
 $\exp(G/G')=p^{\max\{e-1-m,i\}}=p^{e-m-1}$ because we assumed 
$e-1-i > m$.  Finally, as $e-m-1$ is larger than both $i$ and $e-i,$ 
$\exp(G/G') > p^{\max\{i,e-i\}}$. Now, Lemma~\ref{L1} gives that 
$G$ is a non-split metacyclic group.  
\end{proof}

\begin{lem}\label{L3}
Let $e \ge 2$ and $1\le i\le e-1$.  
The number of non-isomorphic non-split skew product groups 
of $\Z_{p^e}$ of order $p^{e+i}$ is at most $\min\{i-1,e-1-i\}$.
\end{lem}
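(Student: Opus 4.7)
The plan is to extract an isomorphism invariant --- the value of $\exp(G/G')$ --- from each non-split skew product group and to show it can only take $\min\{i-1,e-1-i\}$ values.

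First I would observe that any skew product group $G=\langle t,s\rangle$ of $\Z_{p^e}$ of order $p^{e+i}$ is metacyclic by Huppert's theorem (as it factorises as a product of two cyclic subgroups), and that the property $Z(G)\cap\langle s\rangle=1$ already used in Lemma~\ref{L1} forces $Z(G)$ to be cyclic. Thus $G$ lies in the restricted family of metacyclic $p$-groups of order $p^{e+i}$ with cyclic centre and a cyclic subgroup $\langle t\rangle$ of order $p^e$, in which (for odd $p$) the triple $(|G|,|G'|,\exp(G/G'))$ is known to determine the isomorphism type.

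Next I would compute $|G'|$ and $\exp(G/G')$ for a non-split such $G$, imitating the argument in the proof of Lemma~\ref{L2}. One obtains $|G'|=p^i$ and $\exp(G/G')=p^{e-m-1}$ for some integer $m\ge 0$. The non-split condition, together with Lemma~\ref{L1} (which gives $\exp(G/G')=p^{\max\{i,e-i\}}$ in the split case), forces $e-m-1>\max\{i,e-i\}$, i.e.\ $m\le\min\{i-2,e-2-i\}$. Combined with $m\ge 0$, the integer $m$ lies in a set of cardinality $\min\{i-1,e-1-i\}$.

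By the classification alluded to above, two non-split skew product groups with the same value of $m$ are mutually isomorphic. Hence the number of isomorphism classes of non-split skew product groups of $\Z_{p^e}$ of order $p^{e+i}$ is at most the number of admissible values of $m$, namely $\min\{i-1,e-1-i\}$, proving the lemma.

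The main obstacle, I expect, is twofold. First, extending the invariant computation from the specific groups $G(i,j)$ of Lemma~\ref{L2} to an arbitrary non-split skew product group, so that $\exp(G/G')=p^{e-m-1}$ with $m$ in the claimed range really does capture every such $G$; this requires teasing out a suitable metacyclic presentation from the cyclicity of $Z(G)$. Second, justifying that $\exp(G/G')$ together with $|G|$ and $|G'|$ completely determines the isomorphism type within the relevant subfamily of metacyclic $p$-groups, which will either invoke a classical classification for odd $p$ or demand a direct structural verification.
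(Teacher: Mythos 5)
Your overall strategy --- extract the invariant $\exp(G/G')$ and argue it takes at most $\min\{i-1,e-1-i\}$ values --- differs from the paper's, which invokes King's canonical presentation for non-split metacyclic $p$-groups (\cite[Theorem~3.2]{K}) and simply counts the admissible parameter tuples; that route needs only the \emph{existence} of a canonical presentation for each isomorphism class, so no completeness-of-invariants claim ever arises. Your route, by contrast, stands or falls on exactly the two steps you defer, and both are genuine gaps rather than routine verifications. First, you cannot compute $|G'|$ and $\exp(G/G')$ for an \emph{arbitrary} non-split skew product group ``by imitating Lemma~\ref{L2}'': that lemma treats the explicit groups $G(i,j)=\langle ta^j,a^{p^{e-1-i}}\rangle\le\langle t,a\rangle$, and the statement that every skew product group of $p$-power index type has this form is Theorem~\ref{T3}, whose proof uses Lemma~\ref{L3} --- so the imitation is circular. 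The paper fills this hole by pinning down the parameters $m,n,c,u$ of King's presentation from three facts: $|G|=p^{e+i}$, $\exp(G)=p^e$, and, crucially, $|Z(G)\cap\langle y\rangle|=|\{x\in\Z_{p^e}: s(x)=x\}|=p^{e-i}$, the last identity resting on Lemma~\ref{KN}(iii). Nothing in your sketch plays the role of this fixed-point computation, and without it even the claim $|G'|=p^i$ is unsupported.

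Second, the inference ``non-split, together with Lemma~\ref{L1}, forces $\exp(G/G')>p^{\max\{i,e-i\}}$'' uses Lemma~\ref{L1} in the wrong direction: that lemma says split implies $\exp(G/G')=p^{\max\{i,e-i\}}$, whose contrapositive rules out splitness when the exponent differs, but it does not say that a non-split group must have a \emph{different} (let alone strictly larger) exponent. To exclude $\exp(G/G')=p^{\max\{i,e-i\}}$ you would already need your completeness claim, and to exclude the values below $p^{\max\{i,e-i\}}$ (which are consistent with $G/G'$ being a rank-two abelian group of order $p^e$) you need the actual parameter analysis. Finally, the completeness claim itself --- that $(|G|,|G'|,\exp(G/G'))$ determines the isomorphism type within the relevant family --- is true for odd $p$ once one also fixes $\exp(G)=p^e$ (it can be read off from the Newman--Xu or King parametrisation), but it is precisely the uniqueness half of the classification and must be proved, not cited as folklore. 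As written, the proposal is a plan whose two ``main obstacles'' are the entire mathematical content of the lemma.
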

\begin{proof}
By \cite[Theorem~3.2]{K}, every metacyclic non-split $p$-group $G$  
has up to isomorphism a presentation in the form: 
\begin{equation}\label{reduced}
G=\langle x,y \mid x^{p^m}=1,y^{p^n}=x^{p^{m-u}},x^y=
x^{1+p^{m-c}} \rangle,
\end{equation}
where 
\begin{equation}\label{mnsc}
\max\{1,m-n+1\}\le u<\min\{c,m-c+1\}.
\end{equation}

Now, suppose that $G$ is a non-split skew product group 
of $\Z_{p^e}$ of order $p^{e+i}$ induced by some $s\in\Skew(\Z_{p^e})$.  
Consider the presentation of $G$ described in \eqref{reduced}. 
Since $G$ is non-split, $m<e$. The exponent $\exp(G)=p^e$. On the 
other hand, \eqref{reduced} shows that $\exp(G)=p^{\max\{m,n+u\}},$ and it follows that $e=n+u$. Thus the order $|y|=p^{n+u}=p^e,$ 
and we obtain $|Z(G)\cap\langle y\rangle|=p^{e-c}$.

We compute next $|Z(G)\cap\langle y\rangle|$ in another way.
Since $y$ has order $p^e,$ it acts on $\Z_{p^e}$ as a full cycle. 
In particular, $G=\langle y,s\rangle$. Suppose that $s$ centralizes 
$y_1\in \langle y\rangle$ . Then $s(y_1(0))=y_1(s(0))=y_1(0),$ and 
so $y_1(0)$ is fixed by $s$. Conversely, suppose that $s(t)=t$ for 
some $t\in\Z_{p^e}$. Then $t=y_2(0)$ for a unique $y_2\in\langle y\rangle,$ and we find  $[s,y_2^{-1}]$ fixes $0$. As $G'$ is normal and cyclic, it is semiregular on $\Z_{p^e}$. We conclude that 
$[s,y_2^{-1}]=1,$  and $y_2$ is centralized by $s$. By these we have shown 
$$
|Z(G)\cap\langle y\rangle|=|t\in\Z_{p^e} : s(t)=t|.
$$ 
Lemma~\ref{KN}(iii) implies 
$|t\in\Z_{p^e} : s(t)=t|=p^{e-i},$ and so 
$|Z(G)\cap\langle y\rangle|=p^{e-i}$ also holds; and therefore, 
$c=i$. From \eqref{reduced}, $|G|=p^{m+n}$. 
As $|G|=p^{e+i}$ and $|y|=p^{n+u}=p^e,$ we get 
$n=e-u$ and $m=i+u$. In view of \eqref{mnsc}, 
the number of possible groups in \eqref{reduced} is bounded above 
by the number of solutions of the inequalities 
$$
\max\{1,i-e+2u+1\}\le u<\min\{i,u+1\}
$$
with variable $u$. This number is $\min\{i-1,e-1-i\},$ and the lemma follows. 
\end{proof}

We are ready to prove the main result of the subsection.

\begin{thm}\label{T3}
The skew-morphisms of $\Z_{p^e}$ of $p$-power order are exactly 
the skew-morphisms $s_{i,j}$ defined in \eqref{sij}.
\end{thm}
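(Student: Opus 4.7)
My approach is a counting argument supported by the structural results in Lemmas~\ref{L1}--\ref{L3}. Let $s\in\Skew(\Z_{p^e})$ have order $p^i$ with $1\le i\le e-1$ (the case $i=0$ being trivial, giving $s=s_{0,0}$), and let $G=\langle t,s\rangle$ be the skew-product group, of order $p^{e+i}$. Since $G=\langle t\rangle\langle s\rangle$ is a product of two cyclic $p$-subgroups, Huppert's theorem guarantees $G$ is metacyclic. The stabiliser $G_0$ of $0$ in $G$ is cyclic of order $p^i$ (by transitivity of $\langle t\rangle$ together with Lemma~\ref{KN}(iii)), with $s$ one of its generators, so each skew-product subgroup of $\Sym(\Z_{p^e})$ of this shape contributes exactly $\phi(p^i)=p^{i-1}(p-1)$ skew-morphisms to $\Skew(\Z_{p^e})$.

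From the $s_{i',j}$-side, Theorem~\ref{T1} produces, for each $k\in\{1,\ldots,e-1\}$, precisely $p^{e-1}(p-1)$ distinct $s_{i',j}$'s of order $p^k$; dividing by the $\phi(p^k)$-to-one correspondence above, these determine $p^{e-k}$ distinct subgroups $G(k,j)\le\Sym(\Z_{p^e})$. By Lemma~\ref{L2} these partition into those with $p^{e-1-k}\mid j$ (split metacyclic) and those with $p^{e-1-k}\nmid j$ (non-split). On the abstract side, Lemma~\ref{L1} singles out the unique split class of order $p^{e+k}$ that can possibly occur as a skew-product group, and Lemma~\ref{L3} caps the non-split isomorphism classes by $\min\{k-1,e-1-k\}$. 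Combining these I obtain an upper bound for the cardinality of $\{s\in\Skew(\Z_{p^e}):|s|=p^k\}$; once this upper bound is shown to equal $p^{e-1}(p-1)$, the inclusion $\{s_{i',j}:|s_{i',j}|=p^k\}\subseteq\{s\in\Skew(\Z_{p^e}):|s|=p^k\}$ furnished by Proposition~\ref{sij=sm} is forced to be an equality, which proves the theorem.

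The main obstacle lies in the tight matching of counts. One must show that every abstract metacyclic isomorphism type compatible with being a skew-product group of $\Z_{p^e}$ is realised by some $G(k,j)$; that the number of realisations in $\Sym(\Z_{p^e})$ of each such abstract type agrees with the number of corresponding $G(k,j)$'s; and in particular that the upper bound of Lemma~\ref{L3} is attained by the non-split portion of the $G(k,j)$-family. A secondary technical issue is that the parameters $(k,j)$ of Section~3 and the parameters $(i',j')$ in the theorem's statement must be aligned via relations like $s_{i',j}=(s_{p^{e-1-k},j})^{m}$ for suitable $m$ coprime to $p$, so that every generator of the cyclic stabiliser $G_0$ in any fixed skew-product group is recognised as a member of the $s_{i',j}$ family; this alignment is straightforward once the structural counts are in place but must be verified.
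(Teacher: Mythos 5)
Your plan diverges from the paper's proof at the decisive final step, and the divergence opens a genuine gap. Lemmas~\ref{L1}--\ref{L3} control only the \emph{abstract isomorphism types} of the skew-product groups of order $p^{e+k}$; they say nothing about how many distinct subgroups of $\Sym(\Z_{p^e})$ (containing $t$, with cyclic point stabiliser complementing $\langle t\rangle$) realise each type. Since the number of skew-morphisms of order $p^k$ equals $\phi(p^k)$ times the number of such \emph{subgroups}, not the number of isomorphism types, your ``upper bound'' does not bound $|\{s\in\Skew(\Z_{p^e}):|s|=p^k\}|$ until you also bound the number of realisations of each type. That extra step is unavoidable here, because the correspondence between types and subgroups is far from bijective: for instance all the split groups $G(k,j)$ with $p^{e-1-k}\mid j$ are isomorphic to $G(k,0)$, so a single isomorphism class can be realised by many distinct subgroups. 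You name this realisation count as ``the main obstacle'' but supply no argument for it, and it does not follow from anything you cite; it is essentially the whole content of the theorem. The paper sidesteps the issue entirely: after identifying the isomorphism type of $G=\langle t,s\rangle$ with some $G(i,j)$, it transports the factorisation $G=\langle x\rangle\langle y\rangle$ into the concrete group $\langle t,a\rangle$, shows that the complement $y$ has a fixed point and hence is conjugate into $\langle a\rangle$ by an element of $\langle x\rangle$, and then reads off $s=s_{m,v}$ directly from the defining relation $yx^k=x^{s(k)}y^{\pi(k)}$. No count of realisations is ever needed.

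A secondary problem is that the counts you do state are wrong. By Theorem~\ref{T1} the number of distinct $s_{i',j}$ of order $p^k$ is $\phi(p^k)\cdot p^{k-1}=p^{2k-2}(p-1)$, not $p^{e-1}(p-1)$ (your figure is inconsistent with the total $\cN_1$ computed in the proof of Theorem~\ref{T5}), and accordingly these skew-morphisms generate $p^{k-1}$ subgroups $G(k,j)$, not $p^{e-k}$. These errors would have to be repaired before any ``tight matching'' could even be attempted. The parts of your plan that are sound are the observation that each skew-product subgroup contributes exactly $\phi(p^k)$ skew-morphisms (namely all generators of its point stabiliser, via \eqref{criterion}), and the ``alignment'' remark, since $\bigl(b_j^{-1}a^{p^{e-1-k}}b_j\bigr)^m=s_{mp^{e-1-k},j}$ shows every generator of the stabiliser of $G(k,j)$ already lies in the family $s_{i,j}$.
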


\begin{proof} 
Let $s$ be a skew-morphism of order $p^i,$ and let 
$G=\langle t,s\rangle$. We show below that
$G$ is isomorphic to one of the groups $G(i,j)$ defined in \eqref{Gij}. 

Suppose that $G$ is a split metacyclic group. We have proved in Lemma~\ref{L1} that $\exp(G)=p^e,$ and this implies that 
$G\cong G(i,0)$. 

Now, suppose that $G$ is non-split. Then by Lemma~\ref{L3}, 
$2\le i\le e-2-i$. Furthermore, by Lemma~\ref{L2}, each of the groups 
$$
G(i,p^j),\;j\in \{0,\ldots,\min\{i-2,e-2-i\}\}
$$
is a non-split metacyclic group. Also, we have computed in 
the proof of Lemma~\ref{L2} that $\exp(G(i,p^j)/G(i,p^j)')=p^{e-1-j},$ and therefore, the above groups are pairwise non-isomorphic. 
This and Lemma~\ref{L3} yield that $G$ is isomorphic to one of 
the above groups $G(i,p^j)$.

Now, we may assume without loss of generality that $G$ is a 
subgroup of $\langle t,a\rangle$. Then the skew-morphism $s$ and its 
power function $\pi$ can be described as follows: there exists 
a factorisation $G=\langle x\rangle \langle y\rangle$ such that 
$|x|=p^e,\; |y|=p^i$ (thus $\langle x\rangle\cap\langle y\rangle=1$),  $Z(G) \cap \langle y\rangle=1,$ and for every $k\in \Z_{p^e},$
\begin{equation}\label{yxz}
yx^k=x^{s(k)}y^{\pi(k)}.
\end{equation}

Now, $x=t^ua^v$ and $y=t^wa^z$ for some 
$u,v,w,z\in\{0,\ldots,p^{e-1}-1\}$. 
Since $|x|=p^e,$ it follows that $p\nmid u,$ see Lemma~\ref{order}.
Suppose that $y$ is fixed-point free. 
This gives $((p+1)^z-1)X=-w$ has no solution for $X\in \Z_{p^e},$ 
or equivalently, $\gcd(w,p^e)<\gcd((p+1)^z-1,p^e)$. This in turn 
implies that $|t^w|>|a^z|,$ and $t^{p^{e-1}} \in \langle t^wa^z\rangle=\langle y\rangle$. As $t^{p^{e-1}} \in Z(\langle t,a\rangle),$ 
$Z(G)\cap\langle y\rangle\ne 1,$ a contradiction. Therefore, $y$ has 
a fixed point. It follows from this that $y$ can be mapped into 
$\langle a\rangle$ under conjugation by some element $x_1\in\langle x\rangle$. Thus $y^{x_1}=a^m$ for some $m\in\{0,\ldots,p^{e-1}-1\},$ and after conjugation by $x_1,$ \eqref{yxz} becomes 
$a^m x^k=x^{s(k)}(a^m)^{\pi(k)},$ and hence 
$$
a^m(t^ua^v)^k=(t^ua^v)^{s(k)}(a^m)^{\pi(k)}.
$$
This and \eqref{ijk} yield $t^{ua^m(b_v(k))})=t^{ub_v(s(k))}$. 
Since $p\nmid u,$ this implies $(a^mb_v)(k)=(b_vs)(k)$ for 
all $k\in\Z_{p^e},$ and so 
$a^mb_v=b_vs,$ from which $s=b_v^{-1}a^mb_v=s_{m,v}$. 
This completes the proof of the theorem.
\end{proof}

\subsection{Skew-morphisms whose order is not a $p$-power}
We complete the classification of all skew-morphisms of $\Z_{p^e}$ 
by proving the following theorem.

\begin{thm}\label{T4}
The skew-morphisms of $\Z_{p^e}$ whose order is not a $p$-power  are exactly the skew-morphisms $s_{i,j,k,l}$, 
$k\ne 0$, defined in \eqref{sijkl}.
\end{thm}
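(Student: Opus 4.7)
The strategy is to reduce the non-$p$-power case to the $p$-power case (Theorem~\ref{T3}) via a Sylow decomposition of the skew product group $G=\langle t,s\rangle$. Write $|s|=p^im$ with $\gcd(m,p)=1$ and $m>1$; Lemma~\ref{CJT} forces $m\mid p-1$. Since $G=\langle t\rangle\langle s\rangle$ is the product of two abelian subgroups, Ito's theorem makes $G$ metabelian, hence solvable, and Hall's theorems produce a Sylow $p$-subgroup $P$ of order $p^{e+i}$ containing $\langle t\rangle$ and a cyclic Hall $p'$-subgroup of order $m$. The number of Sylow $p$-subgroups is $\equiv 1\pmod p$ and divides $m<p$, so it equals $1$; hence $P$ is normal and $G=P\rtimes H$ with $H$ cyclic of order $m$.

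The power $f=s^m$ is a $p$-element (of order $p^i$) fixing $0$, hence lies in $P$; together with $t$ it generates a subgroup of $P$ containing the set product $\langle t\rangle\langle f\rangle$ of size $p^{e+i}=|P|$, so $\langle t,f\rangle=P$. By criterion~\eqref{criterion}, $f$ is a skew-morphism, and Theorem~\ref{T3} furnishes an admissible pair $(i',j')$ with $f=s_{i',j'}$; thus $P=\langle t,s_{i',j'}\rangle$. Conjugating by $b_{j'}$ sends $P$ to its standard form $P^*=\langle ta^{j'},a^{i'}\rangle\subseteq\langle t,a\rangle$, while keeping the stabiliser of $0$ in $G^*:=G^{b_{j'}}$ cyclic of order $p^im$, generated by $s^*:=s^{b_{j'}}$.

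The heart of the proof is to show that $s^*$ has the shape $a^Ib^Kb_L$, so that $s=s_{I,j',K,L}$. Set $h^*=(s^*)^{p^i}$: it has order $m$, fixes $0$, and induces by conjugation an automorphism $\psi\in\Aut(P^*)$ of $p'$-order. Lemma~\ref{BC} describes $\psi$ as some $\theta_{u,v,w}$; after replacing $h^*$ by a suitable $P^*$-translate one can arrange $v=0$, and Lemma~\ref{theta} identifies $|h^*|=m$ with the order of $u$ in $\Z_{p^e}^*$. A permutation of $\Z_{p^e}$ fixing $0$ and inducing the prescribed $\theta_{u,0,w}$ on $P^*$ is uniquely determined by induction along the regular $\langle ta^{j'}\rangle$-orbit of $0$; an explicit comparison with the closed form \eqref{bj} for $b_L$ then shows that it lies in the family $\{b^{K'}b_{L'}\}$. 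Writing $s^*=a^{\beta i'}(h^*)^\alpha$ where $m\beta+p^i\alpha=1$, and collecting the product into a single monomial $a^Ib^Kb_L$ using commutation relations between $a$, $b$, and the $b_l$'s, completes the identification $s=s_{I,j',K,L}$; the conditions (C0)--(C2) and $K\ne 0$ (since $m>1$) follow from order considerations, with Proposition~\ref{unique} ensuring canonicity.

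The principal obstacle is the identification $h^*=b^{K'}b_{L'}$: one must verify that every element of $\Sym(\Z_{p^e})$ fixing $0$, normalising $P^*$, and inducing a given $\theta_{u,0,w}$ lies in the two-parameter family $\{b^Kb_L\}$. Handling potentially non-split $P^*$ requires the non-split refinement of the automorphism classification in the spirit of Lemma~\ref{L3}, and the final collection of $a^{\beta i'}(h^*)^\alpha$ into $a^Ib^Kb_L$ mirrors, in reverse, the algebraic manipulations used in the proof of Theorem~\ref{T2}.
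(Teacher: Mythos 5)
Your overall route is the same as the paper's: decompose $|s|=p^c d$ with $d\mid p-1$, show the Sylow $p$-subgroup $P\supseteq\langle t\rangle$ is normal, recognise the $p$-part of $s$ as a skew-morphism $s_{i,j}$ via \eqref{criterion} and Theorem~\ref{T3}, conjugate by $b_j$ into $\langle t,a\rangle$, identify the $p'$-part as some $\theta_{u,0,w}$ via Lemmas~\ref{BC} and \ref{theta}, and read off $s=s_{i,j,k,l}$. (The appeal to Ito and Hall is superfluous; the index argument you also give suffices.) However, there is one genuine gap and one faulty step.

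The gap is exactly the obstacle you flag and defer: Lemma~\ref{BC} applies only to \emph{split} metacyclic $p$-groups, and nothing in your argument establishes that $P$ is split; by Lemma~\ref{L2} it need not be for arbitrary $j$. No ``non-split refinement'' of the automorphism classification is needed, and developing one would not help, because the point is that the non-split case cannot occur: the $p'$-part $s_2$ fixes $0$ while $P$ is transitive on $\Z_{p^e}$, so $s_2$ acts \emph{faithfully} on $P$ by conjugation as an automorphism of order $d>1$ prime to $p$; since the automorphism group of a non-split metacyclic $p$-group is itself a $p$-group (a known consequence of \cite{BC}), $P$ must be split. This step is not optional bookkeeping --- it is what produces, via Lemma~\ref{L2}, the divisibility constraint $p^{e-1-c}\mid j$, i.e.\ the first half of condition (C2), without which the target $4$-tuple would not be admissible.

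The faulty step is your reduction to $v=0$: replacing $h^*$ by a $P^*$-translate changes the permutation whose form you are trying to determine (and in general destroys the property $h^*(0)=0$), so it cannot be used to normalise the induced automorphism. The correct argument, which is available to you, is that $h^*$ and $a^{i'}$ are both powers of $s^*$ and hence commute; applying $\theta_{u,v,w}$ to $a_1=a^{i'}$ gives $a_1=t^v a_1$, forcing $v=0$ for the actual $h^*$. With $v=0$ in hand, your uniqueness-along-the-regular-orbit argument correctly identifies $h^*$ with $b^{k}b_{l}$, and the Bezout reassembly of $s^*$ then works, though the paper avoids even that by writing $s=s_1s_2$ directly so that $s^{b_j}=a^i s_3$ with no exponent juggling.
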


\begin{proof}
Let $s$ be a skew-morphism of order $p^cd,$ $p\nmid d$ and $d>1$. Notice that, $c\in\{0,\ldots,e-1\}$ and $d\mid (p-1)$ because of Lemma~\ref{CJT}. 

Let $c=0$. Then by Lemma~\ref{CJT}, $s$ is in $\Aut(\Z_{p^e})$ 
and it has order $d$. Thus $s=b^k$ for some 
$k\in\{2,\ldots,p-2\},$ and so $s=s_{0,0,k,0}$. 

For the rest of the proof it will be assumed that $c\ne 0$.
Write $s=s_1s_2$ where $s_1$ has order $p^c$ and $s_2$ has order $d$.  Let $G=\langle t,s\rangle$ and $P$ be a Sylow 
$p$-subgroup of $G$ containing $t$. The group $P$ factorises a 
$P=\langle t\rangle\langle s_1\rangle$. Now, \eqref{criterion} yields 
that $s_1$ is a skew-morphism of $\Z_{p^e}$ of order $p^c$. 
By Theorem~\ref{T3},  $s_1=s_{i,j}$ for some 
$i\in\{0,\ldots,p^e-1\}$ and 
$j\in\{0,\ldots,p^{c-1}-1\}$. In particular, 
$P=G(c,j)$. Since $|G:P|=d$ and $d\mid (p-1),$ by Sylow Theorems, 
$P$ is normal in $G$. The permutation $s_2$ fixes $0$ and $P$ 
is transitive on $\Z_{p^e}$. These yield that 
$Z_{\langle s_2\rangle}(P)=1,$ and thus $s_2$ acts by conjugation on  
$P$ as an automorphism of order $d$. We conclude, using the 
known fact (cf. \cite{BC}) that the automorphism group of a non-split metacyclic group is also a $p$-group, that $P=G(c,j)$ is a split 
metacyclic group. By Lemma~\ref{L2}, this is equivalent to the 
condition 
\begin{equation}\label{C2}
\gcd(i,p^{e-2})=p^{e-1-c} \mid j.
\end{equation}  

Let consider the conjugate group $G^{b_j}$. Then 
$G^{b_j}=\langle ta^j,a^i, s_3\rangle,$ where 
$s_3=s_2^{b_j},$ $P^{b_j}=\langle ta^j,a^i\rangle$ is normal in $G^{b_j},$ and $s_3$ acts on $P^{b_j}$ as an automorphism of 
order $d$. There exists $a_1\in\langle a^i\rangle$
such that $P^{b_j}$ admits the presentation
$$
P^{b_j}=\big\langle t,a_1 \mid t^{p^e}=(a_1)^{p^c}=1,  
t^{a_1}=t^{1+p^{e-c}} \big\rangle.
$$

Clearly, $a_1=a^{i'}$ for a some $i'$ satisfying 
$\gcd(i',p^{e-1})=\gcd(i,p^{e-1})=p^{e-1-c}$.
According to Lemma~\ref{BC} the element $s_3$ acts on 
$P^{b_j}$ by conjugation as an automorphism $\theta_{u,v,w},$ 
where $u$ is a unit of $\Z_{p^e},$  $v\in\{0,\ldots,p^e-1\},$  and $w\in\{0,\ldots,p^c-1\}$ such that $p^{2c-e}\mid w$ if $2c>e$.  
As $s_3$ commutes with $a_1,$ we find $a_1=a_1^{s_3}=
\theta_{u,v,w}(a_1)=t^va_1,$ hence $v=0$. Also, 
$t^{s_3}=\theta_{u,0,w}(t)=t^ua_1^w=t^ua^{i'w}$. 
This implies that $s_3=b^kb_{i'w}$ where $k\in\{0,\ldots,p-2\}$ and 
$k\ne 0$. Then $s^{b_j}=(s_1s_2)^{b_j}=(s_{i,j}s_2)^{b_j}=a^i s_3=a^ib^kb_{i'w},$ and thus $s=b_j^{-1}a^ib^kb_{i'w}b_j=
s_{i,j,k,l}$ where $l=i'w$. 

To finish the proof it remains to verify that 
the $4$-tuple $(i,j,k,l)$ is admissible. 
We have $i,l\in \{0,\ldots,p^e-1\}$ with $i\ne 0,$
$j\in\{0,\ldots,p^{c-1}-1\},$ and 
$k\in\{0,\ldots,p-2\}$ with $k\ne 0$. Now, $j$ belongs  to 
the required interval because $\gcd(i,p^{e-2})=p^{e-c-1},$ 
and we obtain that (C0) holds.

Since both $i\ne 0$ and $k\ne 0,$ we need to check whether 
(C2) holds. The first part follows from \eqref{C2}. The second 
part is equivalent to  $p^{\max\{e-1-c,c-1\}}\mid l$.  
Since $l=i'w$ and $\gcd(i',p^{e-1})=\gcd(i,p^{e-1})=p^{e-1-c},$ 
the divisibility $p^{e-1-c} \mid l$ follows.  
We are done if $e-1-c \ge c-1,$ thus suppose that $e-1-c < c-1$. 
In this case $2c>e,$ hence $p^{2c-e} \mid w,$ and we  get 
$p^{c-1} \mid i'w=l,$ as claimed. This completes the proof of 
the theorem. 
\end{proof}

\subsection{Enumeration}
Finally, we are ready to count the number of skew-morphisms.
In view of Proposition~\ref{unique} and Theorems~\ref{T3} and \ref{T4},  this is equivalent to count the number of admissible $4$-tuples $(i,j,k,l)$. Theorem~\ref{T-main} follows from the following theorem.

\begin{thm}\label{T5}
If $e\ge  2$ and $p$ is an odd prime, then the number 
of admissible $4$-tuples $(i,j,k,l)$ is equal to 
$(p-1)(p^{2e-1}-p^{2e-2}+2)/(p+1)$.
\end{thm}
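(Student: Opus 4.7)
The approach is to partition admissible 4-tuples into three classes determined by (C1) and (C2), namely (A) $i = 0$, (B) $i \neq 0$ and $k = 0$, and (C) $i \neq 0$ and $k \neq 0$. Inside each class I further index by $c$ with $p^c = \gcd(i, p^{e-2})$, since $c$ controls every remaining range and divisibility condition appearing in (C0) and (C2). The key combinatorial input is the basic count: for each $c \in \{0, \ldots, e-2\}$, the number of $i \in \{1, \ldots, p^{e-1}-1\}$ with $\gcd(i, p^{e-2}) = p^c$ equals $(p-1)\,p^{e-2-c}$ (write $i = p^c m$ with $\gcd(m,p) = 1$ and $1 \leq m < p^{e-1-c}$).

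In class (A), the convention $\gcd(0, p^{e-2}) = p^{e-2}$ forces $c = e-2$, so (C0) gives $j = 0$ and (C1) gives $l = 0$; only $k \in \{0, \ldots, p-2\}$ varies, contributing $p-1$ tuples. In class (B), (C1) forces $l = 0$ while (C2) is vacuous, so $j$ ranges freely over $\{0, \ldots, p^{e-2-c}-1\}$. Summing over $c$ produces
\[
\sum_{c=0}^{e-2} (p-1)\,p^{e-2-c}\cdot p^{e-2-c} \;=\; (p-1)\,\frac{p^{2(e-1)}-1}{p^2-1} \;=\; \frac{p^{2e-2}-1}{p+1}.
\]

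For class (C), (C2) requires $p^c \mid j$ and $p^{\max\{c,\,e-2-c\}} \mid l$, and the raw counts are piecewise in $c$: when $c \leq (e-2)/2$ there are $p^{e-2-2c}$ admissible values of $j$ and $p^{c+1}$ of $l$, whereas when $c > (e-2)/2$ only $j=0$ works and $l$ has $p^{e-1-c}$ values. The crucial observation is that in both ranges the product $|\{j\}|\cdot|\{l\}|$ collapses to the same quantity $p^{e-1-c}$, so the fibre over $c$ contributes the uniform amount
\[
(p-1)\,p^{e-2-c}\cdot(p-2)\cdot p^{e-1-c} \;=\; (p-1)(p-2)\,p^{2e-3-2c};
\]
summing the resulting geometric series $p + p^3 + \cdots + p^{2e-3}$ yields $p(p-2)(p^{2e-2}-1)/(p+1)$.

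Adding the three contributions,
\[
(p-1) \;+\; \frac{\bigl(1 + p(p-2)\bigr)\,(p^{2e-2}-1)}{p+1} \;=\; \frac{(p-1)\bigl[(p+1) + (p-1)(p^{2e-2}-1)\bigr]}{p+1},
\]
and expanding the bracket (using $1 + p(p-2) = (p-1)^2$) simplifies the numerator to $(p-1)(p^{2e-1} - p^{2e-2} + 2)$, which gives the claimed count. The only delicate point in the argument is the piecewise analysis in class (C); once the uniform cancellation $|\{j\}|\cdot|\{l\}| = p^{e-1-c}$ is isolated, the rest is routine manipulation of geometric sums.
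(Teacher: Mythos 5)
Your proof is correct and follows essentially the same route as the paper: partition the admissible tuples according to whether $i$ and $k$ vanish, stratify by $c$ with $p^c=\gcd(i,p^{e-2})$, count $(p-1)p^{e-2-c}$ choices of $i$ in each stratum, and sum the resulting geometric series. Your explicit verification that $|\{j\}|\cdot|\{l\}|=p^{e-1-c}$ in both ranges of $c$ is exactly the simplification the paper performs silently when it rewrites $p^{\max\{0,e-2-2c\}+e-1-\max\{c,e-2-c\}}$ as $p^{e-1-c}$.
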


\begin{proof}
Let $\cN_1$ denote the number admissible $4$-tuples 
$(i,j,k,l)$ with $k=0,$ and let $\cN_2$ denote the number of 
those with $k\ne 0$. By (C1), $\cN_1$ is equal to the number 
of admissible $4$-tuples $(i,j,0,0)$. Therefore, 
\begin{eqnarray*}
\cN_1&=& 1+\sum_{c=0}^{e-2}\sum_{i\in\{1,\ldots,p^{e-1}-1\} \atop \gcd(i,p^{e-2})=p^c}p^{e-2-c}=1+\sum_{c=0}^{e-2}(p^{e-1-c}-p^{e-2-c})p^{e-2-c} \\
&=& \frac{p(p^{2e-3}+1)}{p+1}.
\end{eqnarray*}

Furthermore, using (C0) and (C2), we find 
\begin{eqnarray*}
\frac{\cN_2}{p-2}&=& 1+\sum_{c=0}^{e-2}\sum_{i\in\{1,\ldots,p^{e-1}-1\} \atop \gcd(i,p^{e-2})=p^c}
p^{\max\{0,e-2-2c\}+e-1-\max\{c,e-2-c\}} \\
&=&1+\sum_{c=0}^{e-2}(p^{e-1-c}-p^{e-2-c})p^{e-1-c} 
=\frac{p^{2e-1}+1}{p+1}. 
\end{eqnarray*}

Therefore, $(p+1)(\cN_1+\cN_2)=
p(p^{2e-3}+1)+(p-2)(p^{2e-1}+1),$ and hence 
$\cN_1+\cN_2=(p-1)(p^{2e-1}-p^{2e-2}+2)/(p+1)$.
\end{proof}

\section*{Acknowledgements}
The first author was supported in part by 
the Slovenian Research Agency (research program P1-0285, and 
research projects N1-0032, N1-0038, J1-5433 and J1-6720). The second author was supported by the project LO1506 of the Czech Ministry of
Education, Youth and Sports.

\end{document}